\newtheoremstyle{uprightstyle}
{3pt} 
{3pt} 
{\upshape} 
{} 
{\bfseries} 
{.} 
{ }
{}
\theoremstyle{uprightstyle}
\newtheorem{theorem}{Theorem}[section]
\newtheorem{definition}[theorem]{Definition}
\newtheorem{example}[theorem]{Example}
\newtheorem{lemma}[theorem]{Lemma}
\newtheorem{proposition}[theorem]{Proposition}
\newtheorem{remark}[theorem]{Remark}
\newenvironment{proof}[1][Proof]{\noindent \textbf{#1.} }{\ \ $\Box$}
\numberwithin{equation}{section}
\begin{document}
	\title{Stochastic Volterra integral equations driven by $ G $-Brownian motion}
\author{Bingru Zhao \thanks{Zhongtai Securities Institute for Financial Studies,
		Shandong University, Jinan, Shandong 250100, PR China. bingruzhao@mail.sdu.edu.cn.}
	\and Renxing Li\thanks{Zhongtai Securities Institute for Financial Studies,
		Shandong University, Jinan, Shandong 250100, PR China. 202011963@mail.sdu.edu.cn.}
		\and Mingshang Hu \thanks{Zhongtai Securities Institute for Financial Studies,
		Shandong University, Jinan, Shandong 250100, PR China. humingshang@sdu.edu.cn.
		Research supported by the National Natural Science Foundation of China (No. 12326603, 11671231). }
}
\makeatletter
\renewcommand{\@date}{} 
\makeatother
	\maketitle

	\textbf{Abstract}. 
	In this paper, we study the stochastic Volterra integral equation driven by $G$-Brownian motion ($G$-SVIE). The existence, uniqueness and two types of continuity of the solution to $G$-SVIE are obtained. Moreover, combining a new quasilinearization technique with the two-step approximation method, we establish the corresponding comparison theorem for a class of $G$-SVIEs. In particular, by means of this method, the classical assumptions on partial derivatives of the coefficients are unnecessary.
	
	{\textbf{Key words}. }$ G $-expection, $ G $-Brownian motion, Stochastic Volterra integral equations, Comparison theorem
	
	\textbf{AMS subject classifications.}  60H10
	
	\addcontentsline{toc}{section}{\hspace*{1.8em}Abstract}
	\section{Introduction}
	\makeatletter
	\newcommand{\rmnum}[1]{\romannumeral #1}
	\newcommand{\Rmnum}[1]{\expandafter\@slowromancap\romannumeral #1@}
	\makeatother
	\noindent
	
	Let $ B $ be a standard Brownian motion on the Wiener space. A classical stochastic Volterra integral equation (SVIE) takes the following form
	\begin{equation}\label{introduction1}
		X(t)=\phi(t)+\int_{0}^{t} b(t, s, X(s)) ds +\int_{0}^{t} \sigma(t, s, X(s)) dB_s, \quad t\in [0,T].
	\end{equation}
	It is observed that SVIE is a natural extension of stochastic differential equation (SDE). Many researchers are devoted to the theory of SVIE. The existence and uniqueness of the solution to the SVIE was first studied in \cite{berger1980volterra,ito1979on}, while the SVIE driven by a semimartingale was established by Protter \cite{protter1985volterra}. Relevant studies on comparison theorems can refer to \cite{tudor1989a,ferreyra2000comparison,wang2015com}. In practical applications, noise often displays long-range dependence, and state equations may involve memory properties. Thus, SVIE is inherently suited to practical systems, serving as an important tool in stochastic control (see, e.g.,\cite{wang2018lq,shi2015optimal,wangh2023linear}). For more related works, see \cite{pardoux1990stochastic,W. George1995sigular,yong2008well,yong2006backward,wang2017optimal,oksendal1993stochastic,zhang2010stochastic} and the references therein.
	
	Inspired by model uncertainty, Peng \cite{peng2005nonlinear,peng2007G,peng2019nonlinear,peng2004filtration,peng2008multi,li2011stopping} systematically proposed $G$-expectation theory, which departs from the constraints of classical probability frameworks. SDEs driven by $ G $-Brownian motion ($ G $-SDEs) are widely studied. Gao \cite{gao2009pathwise} investigated pathwise properties and homeomorphic  properties with respect to the initial values for $ G $-SDEs. Bai and Lin \cite{linbai2014} established the existence and uniqueness of the solution to the $G$-SDE with integral-Lipschitz coefficients. More related works can be found in \cite{luo2014stochastic,linyiqing2013rgsde,lin2013some,gao2010large} and the references therein. 
	
	In this paper, we study a class of stochastic Volterra integral equations driven by a $G$-Brownian motion ($ G $-SVIEs) of the form
	\begin{equation}\label{introdution2}
		X(t)=\phi(t)+\int_{0}^{t} b(t, s, X(s)) ds+\int_{0}^{t} h(t, s, X(s)) d\left\langle B\right\rangle_s +\int_{0}^{t} \sigma(t, s, X(s)) dB_s, \quad t \in[0,T],
	\end{equation}
	where the coefficients are of linear growth and Lipschitz continuous in $x$. Since $b$, $h$, $\sigma$ depend on $(t,s)$, we cannot know whether  the integral $\int_{0}^{t} b(t, s, X(s)) ds+\int_{0}^{t} h(t, s, X(s)) d\left\langle B\right\rangle_s +\int_{0}^{t} \sigma(t, s, X(s)) dB_s $, $t\in[0,T]$ belongs to $ M_{G}^{2}(0,T)$ for each $X(\cdot)\in M_{G}^{2}(0,T)$, which is important in constructing the map \eqref{3.2}. To overcome this difficulty, we impose a modulus of continuity assumption with respect to $ t $ for $b$, $h$, $\sigma$, ensuring that the map \eqref{3.2} is well-defined. Then, by using the contraction mapping principle, we prove the existence and uniqueness of the solution to $G$-SVIE \eqref{introdution2}. Moreover, the mean-square continuity and pathwise continuity of solution are established.
	
	Observe that the comparison theorem for $ G $-SVIEs with general Volterra-type diffusion coefficients cannot hold even when $ G $ is a linear function (see \cite{tudor1989a}). Therefore, we consider the case where  the diffusion coefficient  $ \sigma(t, s, x)$ takes the form $H(t)\sigma(s, x) $. There are two main difficulties in the proof of the comparison theorem. On the one hand, our proof relies on stopping times, but the indicator functions generated by a sequence of stopping times may not belong to $ M_{G}^{2}(0,T) $. Thus, we construct the sequence of stopping times under each $ P\in\mathcal{P} $ by applying the representation theorem of $ G $-expectation (see Theorem \ref{thm2.1}).  On the other hand, the classical quasilinearization technique \eqref{classical_linear} requires the modulus of continuity assumption for the coefficients of the partial derivatives $ \partial_{x}b$ and $ \partial_{x}h $. To relax this assumption, we adopt a new quasilinearization technique (see \eqref{def_n}) and construct two approximation sequences (see \eqref{8}, \eqref{9}). Based on the result of a two-step approximation (see \eqref{two_approximation}), we establish the  comparison theorem for $ G $-SVIEs with the Volterra-type diffusion coefficient $H(t)\sigma(s, x) $. With the help of this approach, the modulus of continuity and Lipschitz continuity assumptions on partial derivatives of coefficients are unnecessarily restrictive, which is different from the assumptions in \cite{tudor1989a,ferreyra2000comparison,wang2015com}. Furthurmore, we can deal with cases where partial derivatives do not exist. Finally, we provide a concrete example in the linear case to demonstrate the effectiveness of our results.
	
	The paper is organized as follows. In section 2, we present some preliminaries for $ G $-expectation framework. In section 3, we prove the existence and uniqueness of solution to the $ G $-SVIE. In section 4, we establish the comparison theorem for $ G$-SVIEs with the Volterra-type diffusion coefficients $H(t)\sigma(s, x) $.
	\section{Preliminaries}
	\noindent
	
	In this section, we briefly recall some basic notions and results of the $ G $-expectation and related $ G $-stochastic analysis. More details can be found in \cite{peng2004filtration,peng2005nonlinear,peng2007G,peng2008multi,peng2019nonlinear} .
	
	Let $\Omega = C_0(\mathbb{R}^+)$ denote the space of all $\mathbb{R}-$valued continuous paths $(\omega_t)_{t\in\mathbb{R}^+}$, with $\omega_0 = 0$, equipped with the distance
	\[ \rho(\omega^{1},\omega^{2}):=\sum_{i = 1}^{\infty}2^{-i}[(\max_{t\in[0,i]}|\omega_t^{1}-\omega_t^{2}|)\wedge 1],\quad \omega^{1},\omega^{2}\in\Omega. \]
	The corresponding canonical process is $B_t(\omega)=\omega_t$, $ t\in[0,\infty) $. For each given $t\geq0$, we define
	\[Lip(\Omega_t):=\{\varphi(B_{t_1\wedge t},\cdots,B_{t_n\wedge t}):n\in\mathbb{N},t_1,\cdots,t_n\in[0,\infty),\varphi\in C_{l.Lip}(\mathbb{R}^{ n})\},\ Lip(\Omega):=\cup_{n=1}^{\infty}Lip(\Omega_{n}), \]
	where $ C_{l,Lip}(\mathbb{R}^{ n}) $ is the linear space of all local Lipschitz functions on $ \mathbb{R}^{ n} $.

	Let $ G:\mathbb{R}\rightarrow\mathbb{R} $ be a given monotonic, sublinear function, i.e., for $ a\in\mathbb{R} $, 
	\begin{align}\label{G}
		G(a)=\frac{1}{2}(\bar{\sigma}^{2}a^{+}-\underline{\sigma}^{2}a^{-}),
	\end{align}
	where $ 0<\underline{\sigma}\leq\bar{\sigma}<\infty  $.
	For each given function $ G$, Peng constructed the $ G $-expectation $ \hat{\mathbb{E}} $ on $ (\Omega,Lip(\Omega)) $(see \cite{peng2019nonlinear} for definition). Then the canonical process $ B $ is a one-dimensional $ G $-Brownian motion under $ \hat{\mathbb{E}} $.
	
	For each $ p\geq1 $, we denote by $ L_{G}^{p}(\Omega) $ the completion of $ Lip(\Omega) $ under the norm $ \|X\|_{L_{G}^{p}}=(\hat{\mathbb{E}}[|X|^{p}])^{\frac{1}{p}} $. Similarly, we can define $ L_{G}^{p}(\Omega_{t}) $ for each fixed $ t $. $ \hat{\mathbb{E}}:Lip(\Omega)\rightarrow\mathbb{R} $ can be continuously extended to the mapping from $ L_{G}^{1}(\Omega) $ to $ \mathbb{R} $. The sublinear expectation space $(\Omega ,L_{G}^{1}(\Omega),\hat{\mathbb{E}})  $ is called $ G $-expectation space.
	
	Define $ \mathcal{F}_{t}:=\sigma(B_{s}:s\leq t)$ and $ \mathcal{F}:=\bigvee_{t\geq0}\mathcal{F}_{t} $. The following is the representation theorem.
	
	\begin{theorem}[\cite{hu2009representation,denis2011function}]\label{thm2.1}
		Let $(\Omega ,L_{G}^{1}(\Omega),\hat{\mathbb{E}})  $ be a $ G $-expectation space. Then there exists a weakly compact set of probability measures $ \mathcal{P} $ on $ (\Omega,\mathcal{F}) $ such that
		\begin{align}
			\hat{\mathbb{E}}[\xi]=\sup_{\mathbb{P}\in\mathcal{P}}E_{P}[\xi], \quad \forall \xi\in L_{G}^{1}(\Omega).
		\end{align} 
	\end{theorem}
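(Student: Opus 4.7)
The plan is to prove the theorem in three stages: first, obtain a supremum representation over a family of linear functionals dominated by $\hat{\mathbb{E}}$ on $Lip(\Omega)$; second, upgrade each such functional to a countably additive probability measure on $(\Omega,\mathcal{F})$; and third, verify weak compactness and extend the representation from $Lip(\Omega)$ to $L_{G}^{1}(\Omega)$.

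For the first stage, observe that $\hat{\mathbb{E}}$ restricted to $Lip(\Omega)$ is sublinear, positively homogeneous, monotone, and constant-preserving. A standard Hahn--Banach argument then furnishes, for each fixed $\xi\in Lip(\Omega)$, a linear functional $L_{\xi}$ on $Lip(\Omega)$ with $L_{\xi}\leq\hat{\mathbb{E}}$ everywhere and $L_{\xi}(\xi)=\hat{\mathbb{E}}[\xi]$. Letting $\mathcal{L}$ denote the collection of all linear functionals on $Lip(\Omega)$ dominated by $\hat{\mathbb{E}}$, this gives $\hat{\mathbb{E}}[\xi]=\sup_{L\in\mathcal{L}}L(\xi)$ pointwise on $Lip(\Omega)$.

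For the second stage, I would show that every $L\in\mathcal{L}$ corresponds to a countably additive probability measure. The crucial analytic input is that $\hat{\mathbb{E}}$ is continuous from above on $Lip(\Omega)$: if $\xi_{n}\in Lip(\Omega)$ decreases pointwise to $0$, then $\hat{\mathbb{E}}[\xi_{n}]\downarrow 0$. Granting this, $|L(\xi_{n})|\leq\hat{\mathbb{E}}[|\xi_{n}|]\to 0$, so $L$ is $\sigma$-continuous at $0$, and the Daniell--Stone extension theorem produces a countably additive probability $P_{L}$ on $(\Omega,\mathcal{F})$ with $L(\xi)=E_{P_{L}}[\xi]$ for $\xi\in Lip(\Omega)$. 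Set $\mathcal{P}_{0}:=\{P_{L}:L\in\mathcal{L}\}$. For the third stage, I would exploit the uniform moment bounds $\hat{\mathbb{E}}[|B_{t}-B_{s}|^{2k}]\leq C_{k}|t-s|^{k}$, inherited from Peng's construction of $G$-Brownian motion via the $G$-heat equation, to conclude $E_{P}[|B_{t}-B_{s}|^{2k}]\leq C_{k}|t-s|^{k}$ uniformly in $P\in\mathcal{P}_{0}$. Kolmogorov's tightness criterion then makes $\mathcal{P}_{0}$ relatively weakly compact on $C_{0}(\mathbb{R}^{+})$, and its weak closure $\mathcal{P}$ preserves the representation. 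The extension to $\xi\in L_{G}^{1}(\Omega)$ follows by density: approximating $\xi$ by $\xi_{n}\in Lip(\Omega)$ in the $L_{G}^{1}$-norm, both $|\hat{\mathbb{E}}[\xi]-\hat{\mathbb{E}}[\xi_{n}]|$ and $\sup_{P\in\mathcal{P}}|E_{P}[\xi]-E_{P}[\xi_{n}]|$ are dominated by $\hat{\mathbb{E}}[|\xi-\xi_{n}|]\to 0$.

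The main obstacle will be establishing continuity from above of $\hat{\mathbb{E}}$ on $Lip(\Omega)$, which is not transparent from the PDE-based definition. One route uses Dini-type compactness arguments on paths combined with the tightness estimates above, which creates a mild chicken-and-egg situation that must be broken carefully. An alternative strategy, followed by Denis--Hu--Peng, sidesteps Daniell--Stone entirely by constructing $\mathcal{P}$ directly as the family of laws of stochastic integrals $\int_{0}^{\cdot}\sigma_{s}\,dW_{s}$ with adapted processes $\sigma$ taking values in $[\underline{\sigma},\bar{\sigma}]$, and then identifying $\hat{\mathbb{E}}[\xi]=\sup_{P\in\mathcal{P}}E_{P}[\xi]$ via the stochastic-control representation of Peng's $G$-heat PDE; this stochastic-control route avoids ever having to prove the upward regularity as a standalone step and is probably the cleanest path to the weak compactness of $\mathcal{P}$ as well.
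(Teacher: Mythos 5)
The paper does not actually prove Theorem \ref{thm2.1}: it is imported verbatim from \cite{hu2009representation,denis2011function} as a preliminary, so there is no in-paper argument to measure your proposal against. Judged on its own terms, your sketch is a faithful reconstruction of the proofs in those two references, and you have correctly located the one genuinely hard point. The Hahn--Banach stage is unproblematic (a sublinear, monotone, constant-preserving functional is the upper envelope of the dominated linear functionals, each of which is automatically positive and normalized). The weak link is exactly the one you flag: continuity from above of $\hat{\mathbb{E}}$ on $Lip(\Omega)$ is, in \cite{denis2011function}, a \emph{consequence} of the representation over a weakly compact family rather than an ingredient of it, so your first route through Daniell--Stone is circular as written and cannot be repaired without importing the second route anyway. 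Your fallback --- constructing $\mathcal{P}$ directly as the laws of $\int_{0}^{\cdot}\theta_{s}\,dW_{s}$ with adapted $\theta$ valued in $[\underline{\sigma},\bar{\sigma}]$, identifying the upper expectation with $\hat{\mathbb{E}}$ through the stochastic-control representation of the $G$-heat equation, obtaining tightness from the uniform moment bounds $E_{P}[|B_{t}-B_{s}|^{2k}]\leq C_{k}|t-s|^{k}$, and extending to $L_{G}^{1}(\Omega)$ by density --- is precisely the argument of \cite{denis2011function}, and is the route you should present as primary rather than as an afterthought.

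Two small points to tighten if you write this out in full. First, elements of $Lip(\Omega)$ are only locally Lipschitz cylinder functions with polynomial growth, so passing the identity $\hat{\mathbb{E}}[\xi]=\sup_{P}E_{P}[\xi]$ from $\mathcal{P}_{0}$ to its weak closure requires uniform integrability (again supplied by the uniform moment bounds), not merely weak convergence of measures tested against bounded continuous functions. Second, the identification $\hat{\mathbb{E}}[\varphi(B_{t_{1}},\dots,B_{t_{n}})]=\sup_{P\in\mathcal{P}_{0}}E_{P}[\varphi(B_{t_{1}},\dots,B_{t_{n}})]$ must be carried out on cylinder functions by backward induction through the nested PDE definition of the $G$-expectation; it is not a one-line consequence of the one-dimensional control representation. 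Neither point is a conceptual obstruction, but both are where the actual work in \cite{denis2011function} lives.
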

	
	From \cite{denis2011function}, we introduce the definition of capacity as follows:
	\[\mathrm{c}(A):=\sup_{\mathbb{P}\in\mathcal{P}}P(A), \quad A\in\mathcal{F}.\]

	\begin{definition}
		A set $ A $ is polar if $ \mathrm{c}(A)=0 $ and a property holds “quasi-surely” (q.s.) if it holds outside a polar set.
	\end{definition}
	
	\begin{definition}
		For each $ T>0 $ and $ p\geq1 $, set
		\[
		M_G^{p,0}(0,T):=\left\{\eta(t)=\sum_{j = 0}^{N - 1}\xi_jI_{[t_j,t_{j + 1})}(t):N\in\mathbb{N},0= t_0< t_1<\cdots< t_N= T, \ \xi_j\in L_G^p(\Omega_{t_j})\right\}.
		\]
		We denote by $ M_{G}^{p}(0,T) $ the completion of $ M_{G}^{p,0}(0,T) $ under the norm $ \|\eta\|_{M_{G}^{p}}:=(\hat{\mathbb{E}}[\int_{0}^{T}|\eta(t)|^{p}dt])^{\frac{1}{p}} $.
	\end{definition}
	
	According to \cite{li2011stopping,peng2007G,peng2008multi,peng2019nonlinear}, the integrals $ \int_{0}^{t}\eta(s)dB_{s}$ and $ \int_{0}^{t}\xi(s)d\langle B\rangle_{s} $ are well-defined for $ \eta(\cdot)\in M_{G}^{2}(0,T) $ and $ \xi(\cdot)\in M_{G}^{1}(0,T) $, where $\langle B\rangle $ denotes the quadratic variation process of $ B $. By Proposition 3.4.5 and Corollary 3.5.5 in \cite{peng2019nonlinear}, we have 
	\[ \underline{\sigma}^{2}dt\leq d\langle B\rangle_{t}\leq\bar{\sigma}^{2}dt, \quad \text{q.s.,} \]
	and for $\eta(\cdot)\in M_{G}^{2}(0,T) $,
	\begin{align}\label{ito}
		\hat{\mathbb{E}}\left[ \left|\int_{0}^{t}\eta(s)dB_{s} \right|^{2} \right] =\hat{\mathbb{E}}\left[\int_{0}^{t}|\eta(s)|^{2}d\langle B\rangle_{s}\right] \leq\bar{\sigma}^{2} \hat{\mathbb{E}}\left[\int_{0}^{t}|\eta(s)|^{2}ds\right].
	\end{align}
	
	\begin{theorem}[\cite{hu2014backward}]
		Let $ p\geq2 $ and $ T>0 $. For all $ \eta(\cdot)\in M_{G}^{p}(0,T) $, we have
		\begin{align}\label{bdg} \underline{\sigma}^{p}c_{p}\hat{\mathbb{E}}\left[\left(\int_{0}^{T}|\eta(s)|^{2}ds \right)^{\frac{p}{2}}  \right]\leq\hat{\mathbb{E}}\left[ \sup_{t\in[0,T]}\left|\int_{0}^{t}\eta(s)dB_{s} \right|^{p} \right] \leq\bar{\sigma}^{p}C_{p} \hat{\mathbb{E}}\left[\left(\int_{0}^{T}|\eta(s)|^{2}ds \right)^{\frac{p}{2}}  \right], \end{align}
		where $ 0<c_{p}<C_{p}<\infty $ are constants.
	\end{theorem}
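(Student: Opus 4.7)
The plan is to reduce the sublinear estimate to the classical Burkholder--Davis--Gundy (BDG) inequality by invoking the representation theorem (Theorem \ref{thm2.1}) and exploiting the two-sided bound $\underline{\sigma}^{2} dt \leq d\langle B\rangle_{t} \leq \bar{\sigma}^{2} dt$ that holds quasi-surely.

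First, I would prove the inequality for simple processes $\eta\in M_{G}^{p,0}(0,T)$, where both the stochastic integral and its pathwise supremum are well-defined random variables. Fix $P\in\mathcal{P}$. Under $P$, the canonical process $B$ is a continuous $P$-martingale with quadratic variation $\langle B\rangle$, hence $M_{t}:=\int_{0}^{t}\eta(s)\, dB_{s}$ is a continuous $P$-martingale with $\langle M\rangle_{T}=\int_{0}^{T}|\eta(s)|^{2}\,d\langle B\rangle_{s}$. Classical BDG applied on the probability space $(\Omega,\mathcal{F},P)$ gives
\begin{equation*}
c_{p}E_{P}\!\left[\langle M\rangle_{T}^{p/2}\right]\leq E_{P}\!\left[\sup_{t\in[0,T]}|M_{t}|^{p}\right]\leq C_{p}E_{P}\!\left[\langle M\rangle_{T}^{p/2}\right].
\end{equation*}
The inequality $\underline{\sigma}^{2}ds\leq d\langle B\rangle_{s}\leq\bar{\sigma}^{2}ds$, valid $P$-a.s.\ since it holds quasi-surely, then sandwiches $\langle M\rangle_{T}^{p/2}$ between $\underline{\sigma}^{p}\bigl(\int_{0}^{T}|\eta(s)|^{2}ds\bigr)^{p/2}$ and $\bar{\sigma}^{p}\bigl(\int_{0}^{T}|\eta(s)|^{2}ds\bigr)^{p/2}$.

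For the upper bound, chain these inequalities and dominate $E_{P}$ by $\hat{\mathbb{E}}$ on the right; taking the supremum over $P\in\mathcal{P}$ on the left and invoking Theorem \ref{thm2.1} yields the claim. For the lower bound, use that
\begin{equation*}
\underline{\sigma}^{p}c_{p}E_{P}\!\left[\Bigl(\int_{0}^{T}|\eta(s)|^{2}ds\Bigr)^{p/2}\right]\leq E_{P}\!\left[\sup_{t\in[0,T]}|M_{t}|^{p}\right]\leq \hat{\mathbb{E}}\!\left[\sup_{t\in[0,T]}|M_{t}|^{p}\right],
\end{equation*}
and take the supremum over $P\in\mathcal{P}$ on the left, again using Theorem \ref{thm2.1} to identify $\sup_{P\in\mathcal{P}}E_{P}[\,\cdot\,]$ with $\hat{\mathbb{E}}[\,\cdot\,]$ on the integrand, which is an $L_{G}^{1}(\Omega)$-element by Hölder and the assumption $\eta\in M_{G}^{p}(0,T)$.

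The remaining step is a density argument: take $\eta_{n}\in M_{G}^{p,0}(0,T)$ with $\|\eta_{n}-\eta\|_{M_{G}^{p}}\to 0$. By the already-established upper BDG estimate applied to $\eta_{n}-\eta_{m}$, the sequence $M^{(n)}_{t}:=\int_{0}^{t}\eta_{n}(s)\,dB_{s}$ is Cauchy in the norm $\hat{\mathbb{E}}[\sup_{t}|\cdot|^{p}]^{1/p}$, and Hölder controls the right-hand side $\hat{\mathbb{E}}[(\int_{0}^{T}|\eta_{n}-\eta|^{2}ds)^{p/2}]$ by $T^{p/2-1}\|\eta_{n}-\eta\|_{M_{G}^{p}}^{p}$. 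Passing to the limit in both inequalities then delivers \eqref{bdg} in full generality. The only delicate point is the lower estimate, since one must be careful to avoid taking $\inf_{P}$ on the wrong side; routing everything through the chain above keeps all uses of the representation theorem as a supremum on the left-hand side, where it is valid.
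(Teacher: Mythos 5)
The paper quotes this result from \cite{hu2014backward} without giving a proof of its own, so there is no internal argument to compare against; your proof is essentially the standard one from that reference (classical Burkholder--Davis--Gundy under each $P\in\mathcal{P}$, the quasi-sure sandwich $\underline{\sigma}^{2}dt\leq d\langle B\rangle_{t}\leq\bar{\sigma}^{2}dt$, the representation theorem taken as a supremum on the correct side, and a density argument with the Hölder bound $T^{p/2-1}\|\cdot\|_{M_{G}^{p}}^{p}$), and it is correct. The one point you pass over lightly is the applicability of Theorem \ref{thm2.1} to $\sup_{t\in[0,T]}|M_{t}|^{p}$: that identity is stated for elements of $L_{G}^{1}(\Omega)$, and membership of the running supremum of the stochastic integral in $L_{G}^{1}(\Omega)$ is not immediate; one either checks its quasi-continuity and uniform integrability separately or, as is done in the cited reference, reads the middle expectation in \eqref{bdg} as the upper expectation $\sup_{P\in\mathcal{P}}E_{P}[\cdot]$ from the outset, after which your chain of inequalities closes without circularity.
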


	For each $ T>0 $ and $ p\geq1 $, define
	\[ \tilde{M}_{G}^{p}(0,T):=\left\lbrace  X(\cdot)\in M_{G}^{p}(0,T): X(t) \in  L_{G}^{p}(\Omega_{t})  \text{ for each } t\in[0,T] \right\rbrace . \] 
	
	\begin{definition}
		A one-dimensional process $  X(\cdot)\in \tilde{M}_{G}^{2}(0,T) $ is said to be mean-square continuous if for any $ t', t\in[0,T] $,
		\[\lim_{t'\rightarrow t}\hat{\mathbb{E}}[|X(t)-X(t')|^{2}]=0. \]
	\end{definition} 
	\begin{theorem}[\cite{denis2011function}]\label{continuous}
		Let $  X(\cdot)\in \tilde{M}_{G}^{p}(0,T) $ for $ T>0 $ and $ p\geq1 $. Assume that there exists positive constants $ c $ and $ \varepsilon $ such that
		\[
		\mathbb{E}[|X(t) - X(s)|^p]\leq c|t - s|^{1+\varepsilon}.
		\]
		Then $ X(\cdot) $ admits a modification $ \tilde{X}(\cdot) $ such that
		\[
		\mathbb{E}\left[\left(\sup_{s\neq t}\frac{|\tilde{X}(t)-\tilde{X}(s)|}{|t - s|^{\alpha}}\right)^p\right]<\infty,
		\]
		for every $ \alpha\in[0,\varepsilon/p) $. As a consequence, paths of $ \tilde{X}(\cdot) $ are quasi-surely H\"{o}lder continuous of order $ \alpha $ for every $ \alpha<\varepsilon/p $. Moreover, $\tilde{X}(\cdot)\in \tilde{M}_{G}^{p}(0,T) $.
	\end{theorem}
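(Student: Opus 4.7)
The plan is to adapt the classical Kolmogorov continuity theorem to the $G$-framework by replacing probability measures with the capacity $\mathrm{c}(\cdot)=\sup_{P\in\mathcal{P}}P(\cdot)$ and using a Markov-type inequality at the $G$-expectation level. First, I would discretize time via the dyadic grid $D_n:=\{kT2^{-n}:0\le k\le 2^n\}$ and set $D:=\bigcup_n D_n$. From the hypothesis and $\mathrm{c}(|Y|>\lambda)\le\lambda^{-p}\hat{\mathbb{E}}[|Y|^p]$, for every $\alpha\in[0,\varepsilon/p)$ one obtains
\[
\mathrm{c}\bigl(|X(kT2^{-n})-X((k-1)T2^{-n})|>2^{-n\alpha}\bigr)\le c\,T^{1+\varepsilon}\,2^{-n(1+\varepsilon-p\alpha)}.
\]
Summing over $k=1,\dots,2^n$ and $n\ge 0$ gives a finite total capacity, and countable subadditivity of $\mathrm{c}$ (inherited through the representation in Theorem \ref{thm2.1}) yields a Borel--Cantelli-type conclusion: there is a polar set $N$ outside which only finitely many of these dyadic oscillations exceed their $\alpha$-threshold. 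Consequently $X|_D$ is H\"older of order $\alpha$ off $N$, and I would define $\tilde{X}(t)$ as the continuous extension from $D$ to $[0,T]$ off $N$ (and zero on $N$).

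The main obstacle is upgrading this pathwise H\"older property to the quantitative $L^p$-bound on the H\"older seminorm. I would introduce the dyadic oscillation $M_n:=\max_{1\le k\le 2^n}|X(kT2^{-n})-X((k-1)T2^{-n})|$ and, by the standard chaining argument on dyadic endpoints, bound the H\"older seminorm along $D$ by a constant multiple of $\sum_{n\ge 0}2^{n\alpha}M_n$. Using subadditivity of $\hat{\mathbb{E}}$ together with the crude estimate
\[
\hat{\mathbb{E}}[M_n^p]\le \sum_{k=1}^{2^n}\hat{\mathbb{E}}\bigl[|X(kT2^{-n})-X((k-1)T2^{-n})|^p\bigr]\le c\,T^{1+\varepsilon}\,2^{-n\varepsilon},
\]
a Minkowski-type application in the variable $n$ produces a geometric series converging exactly when $p\alpha<\varepsilon$. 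Passing to the continuous extension $\tilde{X}$ then delivers the required finiteness of the H\"older-seminorm expectation.

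For the modification property, the hypothesis implies $\hat{\mathbb{E}}[|X(s)-X(t)|^p]\to 0$ as $s\to t$; hence along dyadics $X(s_n)\to X(t)$ in $L_G^p$, and since the quasi-sure limit coincides with $\tilde{X}(t)$ off $N$, we obtain $\tilde{X}(t)=X(t)$ quasi-surely for every $t$. In particular $\tilde{X}(t)\in L_G^p(\Omega_t)$, and because $\tilde X$ and $X$ differ only on $N\times[0,T]$ which is negligible for the $M_G^p$ norm, we conclude $\tilde{X}(\cdot)\in\tilde{M}_G^p(0,T)$. The delicate point I expect to require most care is the capacity-level Borel--Cantelli step: the polar set $N$ must be chosen independently of $P\in\mathcal{P}$, and this uniformity is precisely what the weak compactness of $\mathcal{P}$ and the representation in Theorem \ref{thm2.1} enable.
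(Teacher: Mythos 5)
The paper states this result without proof, citing it from Denis--Hu--Peng \cite{denis2011function}; your argument is the standard Kolmogorov--Centsov dyadic chaining proof transplanted to the upper expectation $\sup_{P\in\mathcal{P}}E_P$ and the capacity $\mathrm{c}$, which is exactly how the cited reference establishes it, and it is correct. Two cosmetic remarks: your first (Borel--Cantelli) paragraph is subsumed by the chaining-plus-Minkowski bound, whose quasi-sure finiteness already gives H\"older continuity on the dyadics off a single polar set; and the uniformity of that polar set over $P$ comes from the countable subadditivity of $\mathrm{c}=\sup_{P\in\mathcal{P}}P$, which holds for any supremum of probability measures, rather than from the weak compactness of $\mathcal{P}$.
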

	
	\section{Existence and uniqueness of the solution of G-SVIE}
	\noindent
	
	Set $ \Delta = \{(t,s) \in [0,T]^2 \mid 0 \leq s \leq t \leq T\} $. Consider the following stochastic Volterra  integral equation driven by $ G $-Brownian motion ($ G $-SVIE) on the finite interval [0,T]:
	\begin{equation}\label{G-SVIE}
		X(t)=\phi(t)+\int_{0}^{t} b(t, s, X(s)) ds+\int_{0}^{t} h(t, s, X(s)) d\left\langle B\right\rangle_s+\int_{0}^{t} \sigma(t, s, X(s)) dB_s, \quad 0 \leq t \leq T,
	\end{equation}
	where the coefficients  
	\begin{equation*}
		b(t, s,\omega, x),\text{ }h(t,s,\omega,x), \sigma(t,s,\omega,x):\Delta  \times \Omega \times \mathbb{R}
		\rightarrow \mathbb{R}.
	\end{equation*}
	
	We need the following assumptions:
	\begin{description}
		\item[(H1)]For each $ t\in[0,T] $ and $ x\in \mathbb{R},\ b(t,\cdot,x) ,\ h(t,\cdot,x) ,\ \sigma(t,\cdot,x) \in M_{G}^{2}(0,t)$;
		\item[(H2)]For each $ x, \ y\in\mathbb{R} $ and $ t,\ s\in\Delta $, there exists a constant $ L>0 $ such that \[|b(t,s,x)-b(t,s,y)|+|h(t,s,x)-h(t,s,y)|+|\sigma(t,s,x)-\sigma(t,s,y)|\leq L|x-y|,  \]   
		\[|b(t,s,x)|^{2}+|h(t,s,x)|^{2}+|\sigma(t,s,x)|^{2}\leq L^{2}(1+|x|^{2});  \] 		
		\item[(H3)] For each $ x, \ y\in\mathbb{R} $ and each $ (t,s),\ (t', s)\in\Delta$, 
		\[ |b(t',s,x)-b(t,s,x)|+|h(t',s,x)-h(t,s,x)|+|\sigma(t',s,x)-\sigma(t,s,x)|\leq \rho(|t'-t|), \]
		where $ \rho:[0,\infty)\rightarrow [0,\infty)$ is continuous and strictly increasing with $ \rho(0)=0 $.
	\end{description}

	\begin{definition}
		Suppose that $  \phi(\cdot)\in M_{G}^{2}(0,T)  $ and $ b$,  $h$, $\sigma $ satisfy the assumptions (H1)-(H3). The process $ X(\cdot) $ is called the solution of the equation \eqref{G-SVIE} if it satisfies the following properties:\\
		(1)$ X(\cdot) \in M_{G}^{2}(0,T)$;\\
		(2)$ X(t)=\phi(t)+\int_{0}^{t} b(t, s, X(s)) ds+\int_{0}^{t} h(t, s, X(s)) d\left\langle B\right\rangle_s+\int_{0}^{t} \sigma(t, s, X(s)) dB_s, \quad 0 \leq t \leq T $.
	\end{definition}
	
	Unlike the general $G$-SDE,  the coefficients of $G$-SVIE depend on both $ t $ and $ s $. Therefore,  we get the following Lemma.
	
	\begin{lemma}\label{MG}
		Let $ X(\cdot)\in M_{G}^{2}(0,T) $. Suppose that $ b, h, \sigma $ satisfy the assumptions (H1)-(H3). Then $M(\cdot)\in \tilde{M}_{G}^{2}(0,T)$, where
		\[M(t)=\int_{0}^{t}\sigma(t,s,X(s))dB_s. \]
		Similarly, $N(\cdot)\in \tilde{M}_{G}^{2}(0,T)$, where  \[ N(t)=\int_{0}^{t}b(t, s, X(s)) ds+\int_{0}^{t} h(t, s, X(s)) d\left\langle B\right\rangle_s .\]
	\end{lemma}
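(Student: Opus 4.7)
The plan is to construct, for each of $M$ and $N$, a sequence of simple processes in $M_G^{2,0}(0,T)$ that converges in the $M_G^2$-norm, and to identify the pointwise-in-$t$ values as elements of $L_G^2(\Omega_t)$. The essential ingredient is the modulus-of-continuity assumption (H3), which lets us freeze the leading $t$-argument of the coefficients at partition points.

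First I would check that $M(t)\in L_G^2(\Omega_t)$ for every fixed $t\in[0,T]$. Given $X(\cdot)\in M_G^2(0,T)$, assumptions (H1) and (H2) together with a standard approximation-by-elementary-processes argument show that $s\mapsto\sigma(t,s,X(s))$ belongs to $M_G^2(0,t)$ for each fixed $t$; the $G$-stochastic integral $M(t)=\int_0^t\sigma(t,s,X(s))dB_s$ is therefore in $L_G^2(\Omega_t)$ by definition. The same reasoning applies to the $ds$ and $d\langle B\rangle_s$ integrals defining $N(t)$.

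Next, fix a partition $\pi_n:0=t_0^n<t_1^n<\cdots<t_{N_n}^n=T$ with mesh $\delta_n\to 0$ and set
\[ M^n(t)=\sum_{i=0}^{N_n-1}M(t_i^n)\,I_{[t_i^n,t_{i+1}^n)}(t). \]
Since $M(t_i^n)\in L_G^2(\Omega_{t_i^n})$, we have $M^n(\cdot)\in M_G^{2,0}(0,T)$. For $t\in[t_i^n,t_{i+1}^n)$, decompose
\[ M(t)-M(t_i^n)=\int_0^{t_i^n}[\sigma(t,s,X(s))-\sigma(t_i^n,s,X(s))]dB_s+\int_{t_i^n}^t\sigma(t,s,X(s))dB_s. \]
Apply \eqref{ito} to each term and use (H3) for the first and (H2) for the second to get
\[ \hat{\mathbb{E}}[|M(t)-M(t_i^n)|^2]\le 2\bar{\sigma}^2 T\rho(\delta_n)^2+2\bar{\sigma}^2 L^2\hat{\mathbb{E}}\Bigl[\int_{t_i^n}^{t_{i+1}^n}(1+|X(s)|^2)ds\Bigr]. \]
To convert these pointwise estimates into a bound on $\|M-M^n\|_{M_G^2}^2$, I would invoke Theorem \ref{thm2.1} to write $\hat{\mathbb{E}}[\cdot]=\sup_{P\in\mathcal{P}}E_P[\cdot]$; under each fixed $P$, classical Fubini and additivity of $E_P$ let us sum over $i$ first, yielding
\[ E_P\Bigl[\int_0^T|M(t)-M^n(t)|^2 dt\Bigr]\le 2\bar{\sigma}^2 T^2\rho(\delta_n)^2+2\bar{\sigma}^2 L^2\delta_n\bigl(T+\|X\|_{M_G^2}^2\bigr), \]
uniformly in $P\in\mathcal{P}$; taking the supremum gives $\|M-M^n\|_{M_G^2}\to 0$, so $M(\cdot)\in\tilde M_G^2(0,T)$.

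The argument for $N(\cdot)$ is parallel, using the Cauchy--Schwarz bound $|\int_a^b f(s)d\mu_s|^2\le(\mu_b-\mu_a)\int_a^b|f(s)|^2 d\mu_s$ with $\mu_s=s$ or $\mu_s=\langle B\rangle_s$ (the latter controlled by $\bar{\sigma}^2 ds$) in place of the $G$-It\^o isometry. The main technical obstacle is that $\hat{\mathbb{E}}$ is only subadditive, so a naive estimate $\sum_i\hat{\mathbb{E}}[A_i]$ would scale like $N_n\delta_n$ rather than $\delta_n$ on the linear-growth term; this is precisely why the detour through the dual representation $\sup_{P\in\mathcal{P}}E_P$ is essential. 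Assumption (H3) is likewise indispensable, since without it the freezing of the first argument of the coefficients at partition points cannot be controlled and the simple-process approximation would fail to converge.
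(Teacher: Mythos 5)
Your proposal is correct and shares the paper's skeleton: approximate $M$ by the simple process $\sum_i M(t_i^n)I_{[t_i^n,t_{i+1}^n)}$, split the increment $M(t)-M(t_i^n)$ into the part where the leading $t$-argument is frozen (controlled by (H3) via $\rho$) and the tail integral over $[t_i^n,t]$ (controlled by the linear growth in (H2)), and pass through the It\^o isometry \eqref{ito}. Where you genuinely diverge is in handling the troublesome term $\int_{t_i^n}^{t_{i+1}^n}(1+|X(s)|^2)\,ds$, which cannot be made small uniformly in $i$ by subadditivity of $\hat{\mathbb{E}}$ alone. The paper truncates, writing $(1+|X|^2)=(1+|X|^2)I_{\{|X|\le N\}}+(1+|X|^2)I_{\{|X|\ge N\}}$, bounds the first piece by $\|\Pi_T^n\|(1+N^2)$, and invokes Theorem 4.7 of \cite{hu2016quasi} to kill the tail as $N\to\infty$; this yields the stronger conclusion $\sup_{t\in[0,T]}\hat{\mathbb{E}}[|M(t)-M_n(t)|^2]\to 0$ before integrating in $t$. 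You instead fix $P\in\mathcal{P}$, exploit countable additivity to resum the subinterval contributions into $\delta_n E_P[\int_0^T(1+|X|^2)ds]\le\delta_n(T+\|X\|_{M_G^2}^2)$, and take the supremum over $P$ at the end. Your route is arguably more elementary in that it avoids the uniform-integrability theorem, but it silently requires the isometry-type bound $E_P[|\int\eta\,dB|^2]\le\bar{\sigma}^2E_P[\int|\eta|^2ds]$ under each \emph{individual} $P\in\mathcal{P}$, not just under $\hat{\mathbb{E}}$ as stated in \eqref{ito}; this is true (each $\int\eta\,dB$ is a symmetric $G$-martingale, hence a $P$-martingale for every $P\in\mathcal{P}$, and $d\langle B\rangle_t\le\bar{\sigma}^2dt$ q.s., hence $P$-a.s.), but it is an extra fact you should state and justify rather than fold into ``classical Fubini and additivity of $E_P$''. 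With that point made explicit, your argument closes; both approaches deliver $\|M-M^n\|_{M_G^2}\to 0$ together with $M(t)\in L_G^2(\Omega_t)$, and the treatment of $N(\cdot)$ via Cauchy--Schwarz and $d\langle B\rangle_s\le\bar{\sigma}^2ds$ is the same in both.
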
   
	\begin{proof}[Proof]
		Since $ \sigma(t,\cdot,x)\in M_{G}^{2}(0,T) $ for each given $ t,x $, it follows that $ M(t)\in L_{G}^{2}(\Omega_{t}) $. Let $ \Pi_{T}^n=\{0=t_{0}^n<\cdots<t_{n}^n=T  \} $ be a sequence of partitions of $ [0, T] $, where \[ \|\Pi_{T}^{n}\|:=\max\{t_{i+1}^n-t_{i}^n,0\leq i\leq n-1 \}\rightarrow0 ,\quad \text{as }  n\rightarrow\infty.\]  Define 
		\[ M_{n}(t)=\sum_{i=0}^{n-1}M(t_{i}^n)I_{[t_{i}^n,t_{i+1}^n)}(t), \quad\ t\in[0,T].\] It is clear that $ M_{n}(\cdot)\in M_{G}^{2}(0,T) $. Therefore, using (H2)-(H3) and \eqref{ito}, we derive that for each $t\in[t_i^n,t_{i+1}^n]$,
		\begin{equation}\label{3.2_1}
			\begin{aligned}
				&\hat{\mathbb{E}}[|M(t)-M(t_i^n)|^{2}]
				\\\leq&\hat{\mathbb{E}}\left[\left|\int_{0}^{t_{i}^n}(\sigma(t,s,X(s))-\sigma(t_{i}^n,s,X(s)))dB_s+\int_{t_{i}^n}^{t}\sigma(t,s,X(s))dB_s\right|^2\right]
				\\\leq&2\bar{\sigma}^2\left( \hat{\mathbb{E}}\left[\int_{0}^{t_{i}^n}\rho^{2}(|t-t_{i}^n|)ds\right]+\hat{\mathbb{E}}\left[\int_{t_{i}^n}^{t}L^2\left(1+|X(s)|^{2}\right)I_{\{|X(s)|\leq N\}}ds\right]\right.
				\\&\left.+\hat{\mathbb{E}}\left[\int_{t_{i}^n}^{t}L^2\left(1+|X(s)|^{2}\right)I_{\{|X(s)|\geq N\}}ds\right]\right) 
				\\\leq &2\bar{\sigma}^2\left( \rho^{2}(\|\Pi_{T}^{n}\|)T+L^2\|\Pi_{T}^{n}\|(1+N^{2})+L^2\hat{\mathbb{E}}\left[\int_{0}^{T}(1+|X(s)|^{2})I_{\{|X(s)|\geq N\}}ds\right] \right) .
			\end{aligned}
		\end{equation}
		From \eqref{3.2_1}, we conclude that 
		\begin{equation}\label{3.2_2}
			\begin{aligned}
				&\sup\limits_{t\in[0,T]}\hat{\mathbb{E}}[|M(t)-M_{n}(t)|^{2}]
				\\\leq&\sum_{i=0}^{n-1}\hat{\mathbb{E}}\left[\left|M(t)-M(t_{i}^n)\right|^{2} \right]I_{[t_{i}^n,t_{i+1}^n)}(t)
				\\\leq{}&2\bar{\sigma}^2\left( \rho^{2}(\|\Pi_{T}^{n}\|)T+L^2\|\Pi_{T}^{n}\|(1+N^{2})+L^2\hat{\mathbb{E}}\left[\int_{0}^{T}(1+|X(s)|^{2})I_{\{|X(s)|\geq N\}}ds\right] \right) .
			\end{aligned}
		\end{equation}
		By Theorem 4.7 in \cite{hu2016quasi}, we have $ \lim\limits_{N\rightarrow\infty}\hat{\mathbb{E}}[\int_{0}^{T}(1+|X(s)|^{2})I_{\{|X(s)|\geq N\}}ds] =0$. Sending $n\rightarrow\infty $, and then $N\rightarrow\infty$,
		we derive $ \sup\limits_{t\in[0,T]}\hat{\mathbb{E}}[|M(t)-M_{n}(t)|^{2}]\rightarrow0  $ and
		\[ \lim_{n\rightarrow\infty}\hat{\mathbb{E}}\left[\int_{0}^{T}|M(t)-M_{n}(t)|^{2}dt\right]\leq \lim_{n\rightarrow\infty} T\sup\limits_{t\in[0,T]}\hat{\mathbb{E}}\left[|M(t)-M_{n}(t)|^{2}\right]=0. \]
		It follows that $ M(\cdot)\in \tilde{M}_{G}^{2}(0,T)$. The proof for $N(\cdot)$ can be established in a similar way. 
	\end{proof}
	\begin{remark}\label{well-def}
		The assumption (H3) ensures that the integral  \[M(t)+N(t)=\int_{0}^{t}b(t,s,X(s))ds+\int_{0}^{t}h(t,s,X(s))d\langle B\rangle_s+\int_{0}^{t}\sigma(t,s,X(s))dB_s,\text{ } t\in[0,T] \] belongs to $\tilde{M}_{G}^{2}(0,T)$ for each $X(\cdot)\in M_{G}^{2}(0,T)$. Then the integral $ M(\cdot)+N(\cdot)$ is well-defined.
	\end{remark}
	\begin{lemma}\label{sup}
		Let $ X(\cdot)\in \tilde{M}_{G}^{2}(0,T)$ be mean-square continuous. Then 
		\[ \sup_{t\in[0,T]}\hat{\mathbb{E}}[|X(t)|^{2}]<\infty. \]
	\end{lemma}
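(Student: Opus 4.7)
The plan is to show that the real-valued function $f(t) := \hat{\mathbb{E}}[|X(t)|^{2}]$ is continuous on the compact interval $[0,T]$, from which boundedness (and hence finiteness of the supremum) follows immediately.

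First I would observe that $f$ is everywhere finite on $[0,T]$: by the definition of $\tilde{M}_{G}^{2}(0,T)$, we have $X(t) \in L_{G}^{2}(\Omega_{t})$ for each $t \in [0,T]$, so $\hat{\mathbb{E}}[|X(t)|^{2}] < \infty$ pointwise. This lets us work with the nonnegative square root $f(t)^{1/2} = \|X(t)\|_{L_{G}^{2}}$.

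The key step is the reverse triangle inequality for the norm $\|\cdot\|_{L_{G}^{2}}$: for any $t, t' \in [0,T]$,
\[
\bigl|\|X(t)\|_{L_{G}^{2}} - \|X(t')\|_{L_{G}^{2}}\bigr| \leq \|X(t) - X(t')\|_{L_{G}^{2}} = \bigl(\hat{\mathbb{E}}[|X(t)-X(t')|^{2}]\bigr)^{1/2}.
\]
Combined with the mean-square continuity assumption $\lim_{t' \to t} \hat{\mathbb{E}}[|X(t)-X(t')|^{2}] = 0$, this shows that $t \mapsto f(t)^{1/2}$ is continuous on $[0,T]$, and hence so is $f$ itself.

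Finally, since $[0,T]$ is compact and $f$ is continuous on it, $\sup_{t \in [0,T]} f(t) < \infty$, which is the desired conclusion. There is no real obstacle here; the only thing to be careful about is invoking the reverse triangle inequality for the seminorm $\hat{\mathbb{E}}[|\cdot|^{2}]^{1/2}$, which is valid because $\hat{\mathbb{E}}$ is sublinear and $\|\cdot\|_{L_{G}^{2}}$ is a genuine norm on $L_{G}^{2}(\Omega)$.
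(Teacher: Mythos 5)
Your proof is correct, and it takes a slightly different route from the paper's. You establish that $t \mapsto \|X(t)\|_{L_{G}^{2}}$ is continuous via the reverse triangle inequality (which is legitimate here: $\|\cdot\|_{L_{G}^{2}}$ is a genuine norm, and Minkowski's inequality holds for sublinear expectations since $\hat{\mathbb{E}}$ is a supremum of linear expectations), and then invoke compactness of $[0,T]$ to get boundedness. The paper instead argues more elementarily: it first upgrades the pointwise mean-square continuity to a uniform statement (there is $\delta>0$ with $\hat{\mathbb{E}}[|X(t)-X(t')|^{2}]<1$ whenever $|t-t'|<\delta$), takes a finite partition of mesh less than $\delta$, and bounds $\hat{\mathbb{E}}[|X(t)|^{2}]\leq 2\bigl(1+\hat{\mathbb{E}}[|X(t_i^n)|^{2}]\bigr)$ using only subadditivity of $\hat{\mathbb{E}}$ and the inequality $|a+b|^{2}\leq 2(|a|^{2}+|b|^{2})$, so the supremum is controlled by the maximum over finitely many partition points. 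Both arguments ultimately rest on compactness of $[0,T]$ (the paper's uniform $\delta$ also requires it, since the definition of mean-square continuity in the paper is pointwise); yours is a bit cleaner conceptually, while the paper's avoids appealing to the norm structure of $L_{G}^{2}$ and to the continuity of the composite function $f$. One small point worth making explicit in your write-up is the justification that Minkowski's inequality transfers to the $G$-framework, which you do gesture at correctly in your final remark.
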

	\begin{proof}
		Since $ X (\cdot)$ is mean-square continuous, there exists a constant $ \delta>0 $ such that for each $ t',\ t\in[0,T] $ with $ |t'-t|<\delta $,
		\[ \hat{\mathbb{E}}[|X(t')-X(t)|^{2}]<1 .\]  Let $ \Pi_{T}^n=\{0=t_{0}^n<\cdots<t_{n}^n=T  \} $ be a sequence of partitions of $ [0, T] $, where \[ \|\Pi_{T}^{n}\|:=\max\{t_{i+1}^n-t_{i}^n,0\leq i\leq n-1 \}<\delta .\] For each $ t\in[0,T]$, there exists $i$ such that $t\in [t_{i}^n,t_{i+1}^n) $. Thus we have
		\begin{align*}
			\hat{\mathbb{E}}[|X(t)|^{2}]={}&\hat{\mathbb{E}}[|X(t)-X(t_{i}^n)+X(t_{i}^n)|^{2}]\\
			\leq{}&2\left(\hat{\mathbb{E}}[|X(t)-X(t_{i}^n)|^{2}]+\hat{\mathbb{E}}[|X(t_{i}^n)|^{2}] \right)\\
			\leq{}&2\left(1+\hat{\mathbb{E}}[|X(t_{i}^n)|^{2}]\right),
		\end{align*}
		which implies that \[\sup\limits_{t\in[0,T]}\hat{\mathbb{E}}[|X(t)|^{2}]<\infty. \] 
	\end{proof}
	
	Now we prove the existence and uniqueness of the solution of equation (\ref{G-SVIE}), which is the main result in this section.
	\begin{theorem}\label{jie1}
		Suppose that $  \phi(\cdot)\in M_{G}^{2}(0,T)  $ and $ b$, $h$, $ \sigma $ satisfy the assumptions (H1)-(H3). Then equation (\ref{G-SVIE}) admits a unique solution $ X(\cdot)\in M_{G}^{2}(0,T) $. Moreover, if $  \phi(\cdot)\in \tilde{M}_{G}^{2}(0,T)  $ is mean-square continuous, then $ X(\cdot)\in \tilde{M}_{G}^{2}(0,T) $ is mean-square continuous.
	\end{theorem}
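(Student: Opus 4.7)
The plan is to obtain existence and uniqueness via the Banach contraction mapping principle applied to a suitable weighted norm on $M_G^2(0,T)$. Define the map
\begin{equation*}
\Gamma(X)(t) := \phi(t)+\int_{0}^{t} b(t,s,X(s))\,ds+\int_{0}^{t} h(t,s,X(s))\,d\langle B\rangle_s+\int_{0}^{t}\sigma(t,s,X(s))\,dB_s,\quad t\in[0,T].
\end{equation*}
By Lemma \ref{MG} together with Remark \ref{well-def}, $\Gamma$ is well-defined and sends $M_G^2(0,T)$ into $\tilde M_G^2(0,T)\subset M_G^2(0,T)$, which is the crucial point that assumption (H3) buys us in the Volterra setting.

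For $X,Y\in M_G^2(0,T)$, I would estimate $\hat{\mathbb{E}}[|\Gamma(X)(t)-\Gamma(Y)(t)|^2]$ by splitting the three differences, applying the Lipschitz bound (H2) to each, and controlling the stochastic integral via the isometry \eqref{ito} and the $d\langle B\rangle\leq\bar\sigma^2 dt$ inequality together with Cauchy--Schwarz. This yields a Volterra-type estimate
\begin{equation*}
\hat{\mathbb{E}}[|\Gamma(X)(t)-\Gamma(Y)(t)|^2]\leq C\int_{0}^{t}\hat{\mathbb{E}}[|X(s)-Y(s)|^2]\,ds,
\end{equation*}
with $C=C(T,L,\bar\sigma)$. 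Equipping $M_G^2(0,T)$ with the equivalent weighted norm $\|X\|_\beta^2:=\hat{\mathbb{E}}[\int_0^T e^{-\beta t}|X(t)|^2\,dt]$ and swapping the order of integration gives $\|\Gamma(X)-\Gamma(Y)\|_\beta^2\leq(C/\beta)\|X-Y\|_\beta^2$. Choosing $\beta>C$ makes $\Gamma$ a strict contraction, and its unique fixed point is the required solution.

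For the second assertion, assume $\phi$ is mean-square continuous. Write the solution as $X=\Gamma(X)$ and first derive an a priori bound $\hat{\mathbb{E}}[|X(t)|^2]\leq C_1(1+\hat{\mathbb{E}}[|\phi(t)|^2])+C_2\int_0^t \hat{\mathbb{E}}[|X(s)|^2]\,ds$; Lemma \ref{sup} applied to $\phi$ gives $\sup_t\hat{\mathbb{E}}[|\phi(t)|^2]<\infty$, and Gronwall then gives $\sup_{t\in[0,T]}\hat{\mathbb{E}}[|X(t)|^2]<\infty$. For $0\leq t<t'\leq T$, decompose each integral in $X(t')-X(t)$ as $\int_0^t[f(t',s,\cdot)-f(t,s,\cdot)]+\int_t^{t'} f(t',s,\cdot)$ with $f\in\{b,h,\sigma\}$; control the first piece by $\rho(|t'-t|)$ using (H3) and the second by $|t'-t|$ using (H2) combined with the uniform $L^2$ bound on $X$, handling the stochastic term by \eqref{ito}. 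This produces $\hat{\mathbb{E}}[|X(t')-X(t)|^2]\leq c(\hat{\mathbb{E}}[|\phi(t')-\phi(t)|^2]+\rho^2(|t'-t|)+|t'-t|)\to 0$.

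The main obstacle I anticipate is organizing the Lipschitz-in-$L^2$ estimate so that the $t$-dependence of the coefficients does not spoil the contraction; the standard short-time / Picard argument for SDEs does not directly apply here, which is why the weighted norm $\|\cdot\|_\beta$ is needed. A secondary technical point is that the split-integral estimate for mean-square continuity requires the uniform second-moment bound on $X$, so the Gronwall step must be carried out \emph{before} the continuity step, not in parallel with it.
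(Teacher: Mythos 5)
Your overall architecture (the map $\Gamma$, well-definedness via Lemma \ref{MG}, a weighted-norm contraction, then the split-integral estimate for mean-square continuity) matches the paper's, but there is a genuine gap in the contraction step, and it is specific to the sublinear setting. You derive the pointwise Volterra estimate $\hat{\mathbb{E}}[|\Gamma(X)(t)-\Gamma(Y)(t)|^2]\leq C\int_0^t\hat{\mathbb{E}}[|X(s)-Y(s)|^2]\,ds$ and then claim that integrating against $e^{-\beta t}\,dt$ and swapping the order of integration yields $\|\Gamma(X)-\Gamma(Y)\|_\beta^2\leq (C/\beta)\|X-Y\|_\beta^2$ with $\|Z\|_\beta^2=\hat{\mathbb{E}}[\int_0^Te^{-\beta t}|Z(t)|^2dt]$. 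This does not follow: since $\hat{\mathbb{E}}$ is only sub-additive, the interchange goes one way, $\hat{\mathbb{E}}[\int_0^T\cdot\,dt]\leq\int_0^T\hat{\mathbb{E}}[\cdot]\,dt$, so after Fubini you arrive at $(C/\beta)\int_0^Te^{-\beta s}\hat{\mathbb{E}}[|X(s)-Y(s)|^2]\,ds$, which dominates $\|X-Y\|_\beta^2$ rather than being dominated by it (and for a general element of $M_G^2(0,T)$ the function $s\mapsto\hat{\mathbb{E}}[|X(s)-Y(s)|^2]$ need not even be integrable, nor $X(s)\in L_G^2(\Omega_s)$ for fixed $s$). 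The same objection applies to your a priori bound $\hat{\mathbb{E}}[|X(t)|^2]\leq C_1(1+\hat{\mathbb{E}}[|\phi(t)|^2])+C_2\int_0^t\hat{\mathbb{E}}[|X(s)|^2]\,ds$ followed by Gronwall, which presupposes $\int_0^t\hat{\mathbb{E}}[|X(s)|^2]\,ds<\infty$.

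The fix is to never leave the single outer expectation. The paper bounds the $ds$ and $d\langle B\rangle_s$ differences pathwise by $3(T+\bar\sigma^4T)L^2\int_0^t|X(s)-Y(s)|^2\,ds$ (see \eqref{3.3}), and for the stochastic integral applies It\^o's formula in the inner variable to write $|\int_0^t\tilde\sigma(t,s)dB_s|^2-\bar\sigma^2\int_0^t|\tilde\sigma(t,s)|^2ds$ as a sum of a symmetric $G$-martingale term and a nonpositive-drift term whose $G$-expectation vanishes (\eqref{4.9_1}--\eqref{4.9_3}); only that remainder is passed through $\hat{\mathbb{E}}[\int\cdot\,dt]\leq\int\hat{\mathbb{E}}[\cdot]\,dt$ (the benign direction), while the dominant term $\bar\sigma^2\hat{\mathbb{E}}[\int_0^Te^{-\beta t}\int_0^t|\tilde\sigma(t,s)|^2ds\,dt]$ stays under one $\hat{\mathbb{E}}$, so classical Fubini applies $\omega$-wise and produces the $1/\beta$ factor against $\|X-Y\|_\beta^2$ itself. (Alternatively, your pointwise estimate does support a Picard-type argument showing $\Gamma^n$ is a contraction for large $n$, since every interchange there is in the benign direction; but as written, the weighted-norm step fails.) Likewise, for the uniform second-moment bound the paper avoids Gronwall entirely, using $X\in M_G^2(0,T)$ to bound $\sup_t\hat{\mathbb{E}}[|X(t)|^2]$ directly by $4\sup_t\hat{\mathbb{E}}[|\phi(t)|^2]+4L^2((\bar\sigma^4+1)T+\bar\sigma^2)\,\hat{\mathbb{E}}[\int_0^T(1+|X(s)|^2)ds]$ as in \eqref{jie_contin_1}. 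Your treatment of the mean-square continuity increments is otherwise in line with \eqref{jie_contin_2}.
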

	\begin{proof}[Proof]
		\textbf{Existence and uniqueness}.
		Define a map $ \Lambda:M_{G}^{2}(0,T)\mapsto M_{G}^{2}(0,T)$ as follows: for each $ X\in M_{G}^{2}(0,T) $,
		\begin{equation}\label{3.2}
			\Lambda_{t}(X)=\phi(t)+\int_{0}^{t} b(t, s, X(s)) ds+\int_{0}^{t} h(t, s, X(s)) d\left\langle B\right\rangle_s +\int_{0}^{t} \sigma(t, s, X(s)) dB_s,\quad t\in[0,T]
		\end{equation}
		According to Lemma \ref{MG}, we obtain that the map $ \Lambda$ is well-defined. 
		
		Next, we will prove that $ \Lambda$ is a contraction.  For each $ X(\cdot),Y(\cdot) \in M_{G}^{2}(0,T) $, we get
		\begin{equation}\label{3.4}
			\Lambda_{t}(X)-	\Lambda_{t}(Y)=\int_{0}^{t}\tilde{b}(t,s)ds+\int_{0}^{t}\tilde{h}(t,s)d\langle B\rangle_s+\int_{0}^{t}\tilde{\sigma}(t,s)dB_s,
		\end{equation}
		where
		\[ \tilde{b}(t,s)=b(t,s,X(s))-b(t,s,Y(s)),\ \tilde{h}(t,s)=h(t,s,X(s))-h(t,s,Y(s)),\ \tilde{\sigma}(t,s)=\sigma(t,s,X(s))-\sigma(t,s,Y(s)). \]
		Noting that 
		\[|\tilde{b}(t,s)|\leq L|X(s)-Y(s)|.
		\]
		And $ \tilde{h}(t,s), \tilde{\sigma}(t,s)$ also hold. Thus we have
		\begin{equation}\label{3.3}
			\begin{aligned}
				\left|\Lambda_{t}(X)-\Lambda_{t}(Y)\right|^2&\leq3\left\{\left(\int_{0}^{t}\tilde{b}(t,s)ds\right)^2+\bar{\sigma}^4\left(\int_{0}^{t}\tilde{h}(t,s)ds\right)^2+\left|\int_{0}^{t}\tilde{\sigma}(t,s)dB_s\right|^2\right\}
				\\&\leq3(T+\bar{\sigma}^4T)L^2\int_{0}^{t}\left|X(s)-Y(s)\right|^2ds+3\left|\int_{0}^{t}\tilde{\sigma}(t,s)dB_s\right|^2.
			\end{aligned}
		\end{equation}
		Set $\beta >0$, it follows that
		\begin{equation}\label{3.5}
			\begin{aligned}
				&\hat{\mathbb{E}}\left[\int_{0}^{T}e^{-\beta t}\left|\Lambda_{t}(X)-\Lambda_{t}(Y)\right|^2dt\right]
				\\\leq&3\left\{(T+\bar{\sigma}^4T)L^2\hat{\mathbb{E}}\left[\int_{0}^{T}e^{-\beta t}\int_{0}^{t}\left|X(s)-Y(s)\right|^2dsdt\right]+\hat{\mathbb{E}}\left[\int_{0}^{T}e^{-\beta t}\left|\int_{0}^{t}\tilde{\sigma}(t,s)dB_s\right|^2dt\right]\right\}.
			\end{aligned}
		\end{equation}
		For each fixed $t\in[0,T]$, set \[
		L(t,r)=\int_{0}^{r}\tilde{\sigma}(t,s)dB_s,\quad r\in[0,t],\ (t,s)\in \Delta.
		\]
		Applying the $ \text { Itô's} $ formula to $ |L(t,r)|^2$ for $r$ on [0,t] and setting $r=t$, we have
		\begin{equation}\label{4.9_1}
			\begin{aligned}
				\left|\int_{0}^{t}\tilde{\sigma}(t,s)dB_s\right| ^2=2\int_{0}^{t}L(t,s)\tilde{\sigma}(t,s)dB_s+\int_{0}^{t}|\tilde{\sigma}(t,s)|^2d\langle B\rangle_s.
			\end{aligned}
		\end{equation}
		Then, we derive
		\begin{equation}\label{4.9_2}
			\begin{aligned}
				\left|\int_{0}^{t}\tilde{\sigma}(t,s)dB_s\right| ^2-\bar{\sigma}^2\int_{0}^{t}|\tilde{\sigma}(t,s)|^2ds=2\int_{0}^{t}L(t,s)\tilde{\sigma}(t,s)dB_s+\int_{0}^{t}|\tilde{\sigma}(t,s)|^2d(\langle B\rangle_s-\bar{\sigma}^2s),
			\end{aligned}
		\end{equation}
		which together with Proposition 4.1.4 in \cite{peng2019nonlinear} implies that
		\begin{equation}\label{4.9_3}
			\begin{aligned}
				\hat{\mathbb{E}}\left[\left(\left|\int_{0}^{t}\tilde{\sigma}(t,s)dB_s\right|^2-\bar{\sigma}^2\int_{0}^{t}|\tilde{\sigma}(t,s)|^2ds\right)\right]= 0.
			\end{aligned}
		\end{equation}
		From \eqref{4.9_1}-\eqref{4.9_3}, we obtain
		\begin{equation}\label{3.6}
			\begin{aligned}
				&\hat{\mathbb{E}}\left[\int_{0}^{T}e^{-\beta t}\left|\int_{0}^{t}\tilde{\sigma}(t,s)dB_s\right|^2dt\right]
				\\\leq&\hat{\mathbb{E}}\left[\int_{0}^{T}e^{-\beta t}\left(\left|\int_{0}^{t}\tilde{\sigma}(t,s)dB_s\right|^2-\bar{\sigma}^2\int_{0}^{t}|\tilde{\sigma}(t,s)|^2ds\right)dt\right]+\bar{\sigma}^2\hat{\mathbb{E}}\left[\int_{0}^{T}e^{-\beta t}\int_{0}^{t}|\tilde{\sigma}(t,s)|^2dsdt\right]
				\\\leq&\int_{0}^{T}e^{-\beta t}\hat{\mathbb{E}}\left[\left|\int_{0}^{t}\tilde{\sigma}(t,s)dB_s\right|^2-\bar{\sigma}^2\int_{0}^{t}|\tilde{\sigma}(t,s)|^2ds\right]dt+ \bar{\sigma}^2\hat{\mathbb{E}}\left[\int_{0}^{T}e^{-\beta t}\int_{0}^{t}|\tilde{\sigma}(t,s)|^2dsdt\right]
				\\\leq& \bar{\sigma}^2L^2\hat{\mathbb{E}}\left[\int_{0}^{T}e^{-\beta t}\int_{0}^{t}\left|X(s)-Y(s)\right|^2dsdt\right].
			\end{aligned}
		\end{equation}
		Combining \eqref{3.5} with \eqref{3.6}, we conclude that
		\begin{equation}\label{3.7}
			\begin{aligned}
				\hat{\mathbb{E}}\left[\int_{0}^{T}e^{-\beta t}\left|\Lambda_{t}(X)-\Lambda_{t}(Y)\right|^2dt\right]&\leq 3(T+\bar{\sigma}^4T+\bar{\sigma}^2)L^2\hat{\mathbb{E}}\left[\int_{0}^{T}e^{-\beta t}\int_{0}^{t}\left|X(s)-Y(s)\right|^2dsdt\right]
				\\&= 3(T+\bar{\sigma}^4T+\bar{\sigma}^2)L^2\hat{\mathbb{E}}\left[\int_{0}^{T}\int_{s}^{T}e^{-\beta t}\left|X(s)-Y(s)\right|^2dtds\right]
				\\&\leq\frac{3(T+\bar{\sigma}^4T+\bar{\sigma}^2)L^2}{\beta}\hat{\mathbb{E}}\left[\int_{0}^{T}e^{-\beta s}\left|X(s)-Y(s)\right|^2ds\right].
			\end{aligned}
		\end{equation}
		Set $\beta=6(T+\bar{\sigma}^4T+\bar{\sigma}^2)L^2$, then
		\[\hat{\mathbb{E}}\left[\int_{0}^{T}e^{-\beta t}\left|\Lambda_{t}(X)-\Lambda_{t}(Y)\right|^2dt\right]\leq\frac{1}{2}\hat{\mathbb{E}}\left[\int_{0}^{T}e^{-\beta s}\left|X(s)-Y(s)\right|^2ds\right].
		\]
		In conclusion, $ \Lambda$ is a contraction mapping, which has a unique fixed point $X(\cdot) $. Then equation (\ref{G-SVIE}) has a unique solution $ X(\cdot) \in M_{G}^{2}(0,T) $. 
		
		\textbf{Continuity. }
	If $  \phi(\cdot)\in \tilde{M}_{G}^{2}(0,T)  $ is mean-square continuous, we can obtain from Lemma \ref{sup} that $\sup\limits_{t\in[0,T]}\hat{\mathbb{E}}[|\phi(t)|^{2}]<\infty $. 
	Recalling the assumptions (H2) and \eqref{ito}, we have 
		\[
		\hat{\mathbb{E}}\left[\left| \int_{0}^{t} b(t, s, X(s)) ds\right| ^{2}\right]+\hat{\mathbb{E}}\left[ \left|\int_{0}^{t} h(t, s, X(s)) d\left\langle B\right\rangle_s\right|^{2}\right] \leq (\bar{\sigma}^{4}+1)TL^2\hat{\mathbb{E}}\left[ \int_{0}^{T} 1+| X(s)| ^2ds\right]
		\]
		and
		\[
		\hat{\mathbb{E}}\left[\left|\int_{0}^{t} \sigma(t, s, X(s)) dB_s\right|^{2}\right]\leq\bar{\sigma}^{2}L^2\hat{\mathbb{E}}\left[ \int_{0}^{T} 1+| X(s)| ^2ds\right].
		\]
		Thus, we conclude that
		\begin{equation}\label{jie_contin_1}
			\begin{aligned}
				\sup_{t\in[0,T]}\hat{\mathbb{E}}[|X(t)|^{2}]
				\leq{}&4\sup_{t\in[0,T]}\hat{\mathbb{E}}[|\phi(t)|^{2}]+4L^{2}\left((\bar{\sigma}^{4}+1)T+\bar{\sigma}^{2}\right)\hat{\mathbb{E}}\left[ \int_{0}^{T} 1+| X(s)| ^2ds\right]
				\\<{}&\infty.
			\end{aligned}
		\end{equation}
		Let $  0\leq t\leq t' \leq T$, using \eqref{ito} and the assumptions (H2)-(H3), we have
		\begin{equation}\label{jie_contin_2}
			\begin{aligned}
				{}&\hat{\mathbb{E}}\left[\left|\int_{0}^{t'}\sigma(t',s,X(s))dB_s-\int_{0}^{t}\sigma(t,s,X(s))dB_s\right|^{2}\right]\\
				={}&\hat{\mathbb{E}}\left[\left|\int_{0}^{t}\sigma(t',s,X(s))-\sigma(t,s,X(s))dB_s+\int_{t}^{t'}\sigma(t',s,X(s))dB_s\right|^{2}\right]\\
				\leq{}&2\bar{\sigma}^2\left(\hat{\mathbb{E}}\left[\int_{0}^{t}\rho^{2}(|t'-t|)ds\right] +\hat{\mathbb{E}}\left[\int_{t}^{t'}L^{2}(1+|X(s)|^{2})ds\right]  \right)\\
				\leq{}&2\bar{\sigma}^2T\rho^{2}(|t'-t|)+2\bar{\sigma}^2L^2|t'-t| \sup_{t\in[0,T]}\hat{\mathbb{E}}[1+|X(t)|^{2}].
			\end{aligned}
		\end{equation}
		Sending $  t'\rightarrow t$, it follows that 
		\begin{equation}\label{jie_contin_3}
			\hat{\mathbb{E}}\left[\left|\int_{0}^{t'}\sigma(t',s,X(s))dB_s-\int_{0}^{t}\sigma(t,s,X(s))dB_s\right|^{2}\right]\rightarrow0,	
		\end{equation}
		which implies that the intergral $\int_{0}^{t}\sigma(t,s,X(s))dB_s,\ t\in[0,T]$, is mean-square continuous. Similarly, 
		\[  \int_{0}^{t} b(t, s, X(s)) ds+ \int_{0}^{t} h(t, s, X(s)) d\left\langle B\right\rangle_s ,\quad t\in[0,T],\] is also mean-square continuous. By the mean-square continuity of $ \phi(\cdot)\in \tilde{M}_G^2(0,T)$ and Remark \ref{well-def} , we conclude that $ X(\cdot) \in\tilde{M}_G^2(0,T)$ is mean-square continuous. The proof is complete.
	\end{proof}
	
	Now we present a priori estimate for the solutions of $G$-SVIEs.
	\begin{theorem}
		Suppose that $  \phi_i(\cdot)\in \tilde{M}_{G}^{2}(0,T)  $ is mean-square continuous and $ b_i$, $h_i$, $\sigma_i$ satisfy the assumptions (H1)-(H3) for $i=1,2$. Let $ X_{i}(\cdot)\in \tilde{M}_{G}^{2}(0,T),i=1,2 ,$ be solutions of the following $G$-SVIEs
		\begin{align}\label{guji}
			X_{i}(t)=\phi_{i}(t)+\int_{0}^{t} b_{i}(t, s, X_{i}(s)) ds+\int_{0}^{t} h_{i}(t, s, X_{i}(s)) d\left\langle B\right\rangle_s+\int_{0}^{t} \sigma_{i}(t, s, X_{i}(s)) dB_s, \ t\in[0,T].
		\end{align}
		Then for each $t\in[0,T]$,
		\begin{align*}
			\hat{\mathbb{E}}[|X_{1}(t)-X_{2}(t)|^{2}]\leq{}&f(t)+C_2(\bar{\sigma},L,T)\int_{0}^{t}f(r)e^{C_2(\bar{\sigma},L,T)(t-r)}dr,
		\end{align*}
		where  $C_1(\bar{\sigma},T)$, $C_2(\bar{\sigma},T,L)$ are constants depending on $ \bar{\sigma},T$, and $ \bar{\sigma},\ T,\ L $ respectively and \[f(t)=C_1(\bar{\sigma},T)\left( \hat{\mathbb{E}}[|\hat{\phi}(t)|^{2}]+\hat{\mathbb{E}}\left[ \int_{0}^{T}|\hat{b}(t,s)|^2+\hat{h}(t,s)|^2+|\hat{\sigma}(t,s)|^2ds\right]\right),
		\]$\hat{\phi}(t)=\phi_{1}(t)-\phi_{2}(t) $, $\hat{l}(t,s)=l_{1}(t,s,X_{2}(s))-l_{2}(t,s,X_{2}(s))$, $\text{ }l=b$, $h$, $\sigma$. 
	\end{theorem}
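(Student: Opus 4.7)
The plan is to subtract the two equations, split each coefficient mismatch $l_1(t,s,X_1)-l_2(t,s,X_2)$ into a Lipschitz part and a forcing part $\hat l(t,s)$, derive a linear integral inequality of the form $u(t)\le f(t)+C\int_0^t u(s)\,ds$ for $u(t):=\hat{\mathbb{E}}[|X_1(t)-X_2(t)|^2]$, and then conclude by a standard Gronwall lemma. Structurally this mirrors the contraction estimate in the proof of Theorem~\ref{jie1}, with $X_2$ playing the role of $Y$, except that the coefficient mismatch now contributes a non-zero inhomogeneity.

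For each $l\in\{b,h,\sigma\}$ I would write
\[ l_1(t,s,X_1(s)) - l_2(t,s,X_2(s)) = \big[l_1(t,s,X_1(s))-l_1(t,s,X_2(s))\big] + \hat l(t,s), \]
so that (H2) bounds the first bracket by $L|X_1(s)-X_2(s)|$. Squaring $X_1(t)-X_2(t)$, using $(\sum_{i=1}^{7}a_i)^2\le 7\sum_{i=1}^{7} a_i^2$, and taking $\hat{\mathbb{E}}$, I would handle the seven resulting terms one by one. The four forcing contributions $\hat\phi(t)$, $\int_0^t\hat b(t,s)\,ds$, $\int_0^t\hat h(t,s)\,d\langle B\rangle_s$ and $\int_0^t\hat\sigma(t,s)\,dB_s$ are estimated by Cauchy--Schwarz together with $d\langle B\rangle_s\le\bar\sigma^2\,ds$ and the Itô inequality \eqref{ito}; enlarging $\int_0^t$ to $\int_0^T$ in the resulting bound yields exactly a constant (depending on $\bar\sigma, T$) times
\[ \hat{\mathbb{E}}[|\hat\phi(t)|^2] + \hat{\mathbb{E}}\Big[\int_0^T |\hat b(t,s)|^2+|\hat h(t,s)|^2+|\hat\sigma(t,s)|^2\,ds\Big], \]
which is precisely $f(t)$. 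The three Lipschitz pieces are treated identically: using (H2), \eqref{ito} and the sub-additivity $\hat{\mathbb{E}}[\int_0^t g(s)\,ds]\le\int_0^t\hat{\mathbb{E}}[g(s)]\,ds$, each contributes a constant multiple of $\int_0^t u(s)\,ds$, giving
\[ u(t) \le f(t) + C_2(\bar\sigma,L,T)\int_0^t u(s)\,ds. \]

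A standard linear Gronwall lemma applied to this pointwise-in-$t$ inequality then produces $u(t)\le f(t) + C_2\int_0^t f(r)\,e^{C_2(t-r)}\,dr$, which is the claim. The main technical point is the treatment of the stochastic integrals in the $G$-framework; however, because we work pointwise in $t$ rather than integrating against $e^{-\beta t}\,dt$ as in the contraction argument of Theorem~\ref{jie1}, the refined identity \eqref{4.9_1}--\eqref{4.9_3} is not needed and \eqref{ito} suffices. A minor prerequisite to verify is that $\hat l(t,\cdot)\in M_G^2(0,t)$ for each fixed $t$, which follows from the same reasoning used to establish well-posedness in Lemma~\ref{MG}, applied now to both $l_1$ and $l_2$.
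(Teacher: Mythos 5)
Your proposal is correct and follows essentially the same route as the paper: the same decomposition of each coefficient difference into a Lipschitz part plus the forcing term $\hat l(t,s)$ via (H2), the same use of \eqref{ito} for the stochastic integral (the paper likewise does not need the refined identity \eqref{4.9_1}--\eqref{4.9_3} here), and the same pointwise-in-$t$ Gronwall argument. The only detail worth making explicit is that $u(t)=\hat{\mathbb{E}}[|X_1(t)-X_2(t)|^2]$ is bounded on $[0,T]$ (from mean-square continuity and Lemma \ref{sup}), so that Gronwall's lemma applies.
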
	
	
	\begin{proof}[Proof] 
		Recalling (H2), we obtain 
		\begin{equation}\label{guji_1}
			\begin{aligned}
				\left|b_1(t,s,X_1(s))-b_2(t,s,X_2(s))\right| &\leq
				\left|b_1(t,s,X_1(s))-b_1(t,s,X_2(s))\right| +|\hat{b}(t,s)|
				\\&\leq L\left|X_1(s)-X_2(s)\right|+|\hat{b}(t,s)|	.
			\end{aligned}
		\end{equation}
		And the same applies to $ h_1(t,s,X_1(s))-h_2(t,s,X_2(s))$ and $\sigma_1(t,s,X_1(s))-\sigma_2(t,s,X_2(s))$. From \eqref{ito}, we get
		\begin{equation}\label{guji_2}
			\begin{aligned}
				&\hat{\mathbb{E}}\left[ \left| \int_{0}^{t}\sigma_1(t,s,X_1(s))-\sigma_2(t,s,X_2(s))dB_s\right| ^2\right] 
				\\\leq&2\bar{\sigma}^2L^2\hat{\mathbb{E}}\left[ \int_{0}^{t}\left|X_1(s)-X_2(s)\right|^2ds\right] +2\bar{\sigma}^2\hat{\mathbb{E}}\left[ \int_{0}^{t}\left| \hat{\sigma}(t,s)\right| ^2ds\right] .
			\end{aligned}
		\end{equation}
		Set $\hat{X}(t)=X_1(t)-X_2(t) .$ Then by a simple calculation, we derive that for each $ t\in[0,T] $, 
		\begin{align*}
			{}&\hat{\mathbb{E}}[|\hat{X}(t)|^{2}]
			\\\leq{}&C_1(\bar{\sigma},T)\left\lbrace  \hat{\mathbb{E}}[|\hat{\phi}(t)|^{2}]+\hat{\mathbb{E}}\left[ \int_{0}^{T}|\hat{b}(t,s)|^2+\hat{h}(t,s)|^2+|\hat{\sigma}(t,s)|^2ds\right]\right\rbrace+C_2(\bar{\sigma},L,T)\int_{0}^{t}\hat{\mathbb{E}}\left[ |\hat{X}(s)|^{2}\right]ds,
		\end{align*} 
		where $C_1(\bar{\sigma},T)$ is a constant depending on $ \bar{\sigma},T$ and $C_2(\bar{\sigma},T,L)$  is a constant depending on $ \bar{\sigma},\ T,\ L.$  Then, by Gronwall's inequality, we conclude that 
		\begin{align*}
			\hat{\mathbb{E}}[|\hat{X}(t)|^{2}]\leq{}&f(t)+C_2(\bar{\sigma},T,L)\int_{0}^{t}f(r)e^{C_2(\bar{\sigma},L,T)(t-r)}dr,   \quad  t\in[0,T],
		\end{align*}
		where \[f(t)=C_1(\bar{\sigma},T)\left(\hat{\mathbb{E}}[|\hat{\phi}(t)|^{2}]+\hat{\mathbb{E}}\left[ \int_{0}^{T}|\hat{b}(t,s)|^2+\hat{h}(t,s)|^2+|\hat{\sigma}(t,s)|^2ds\right]\right).\]
		The estimate for \eqref{guji} is obtained.
	\end{proof}
	
	To ensure the pathwise continuity of solutions to $G$-SVIEs \eqref{G-SVIE}, we need the following assumptions.
	
	\begin{description}
		\item[(H4)] For all $ x, \ y\in\mathbb{R} $ and $ (t,s)$, $(t',s) \in\Delta $, there exist constants $ C_{T},\ \alpha>0 $ such that
		\[   |\sigma(t',s,x)-\sigma(t,s,x)|\leq C_{T}|t-t'|^{\alpha}.\]
	\end{description}
	
	Next, we establish the existence and uniqueness theorem for solutions with continuous sample  paths.
	\begin{theorem}\label{th3.8}
		Suppose that $  \phi(\cdot)\in M_{G}^{2+\varepsilon}(0,T)  $  for some $ \varepsilon>0$ and $ b$, $ h$, $\sigma $ satisfy the assumptions (H1)-(H3). Then equation (\ref{G-SVIE}) admits a unique solution $ X(\cdot)\in M_{G}^{2+\varepsilon}(0,T) $. Moreover, if $  \phi(\cdot)\in \tilde{M}_{G}^{2+\varepsilon}(0,T)$ has continuous paths with $ \sup\limits_{t\in[0,T]}\hat{\mathbb{E}}[|\phi(t)|^{2+\varepsilon}]<\infty $ and $\sigma$ satisfies the assumptions (H4)  with $ \alpha>\frac{1}{2+\varepsilon} $, then the solution $ X(\cdot) $ of equation (\ref{G-SVIE}) has a continuous modification.
	\end{theorem}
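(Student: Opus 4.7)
The argument has two parts. First, for existence and uniqueness in $M_{G}^{2+\varepsilon}(0,T)$, I would repeat the contraction-mapping proof of Theorem~\ref{jie1} verbatim, with the $L^{2}$-It\^{o} identity \eqref{ito} replaced by the BDG inequality \eqref{bdg} for $p=2+\varepsilon\ge 2$. Before running the fixed-point argument one has to extend Lemma~\ref{MG} from $p=2$ to $p=2+\varepsilon$; the same truncation $\{|X(s)|\le N\}$ vs.\ $\{|X(s)|>N\}$ works, invoking Theorem 4.7 of \cite{hu2016quasi} at level $2+\varepsilon$. Applying Jensen to the $ds$ and $d\langle B\rangle_s$ integrals and BDG to the $dB_s$ integral, the Lipschitz assumption (H2) yields
\[
\hat{\mathbb{E}}\bigl[|\Lambda_{t}(X)-\Lambda_{t}(Y)|^{2+\varepsilon}\bigr]\le C(T,\bar{\sigma},L,\varepsilon)\int_{0}^{t}\hat{\mathbb{E}}\bigl[|X(s)-Y(s)|^{2+\varepsilon}\bigr]ds.
\]
Multiplying by $e^{-\beta t}$, integrating in $t$, applying Fubini to swap the order, and choosing $\beta$ large produces a contraction on $M_{G}^{2+\varepsilon}(0,T)$ in the weighted norm.

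Second, for the continuous modification I would apply the Kolmogorov-type criterion of Theorem~\ref{continuous} with $p=2+\varepsilon$ to the process $Y(\cdot):=X(\cdot)-\phi(\cdot)$. Since $\phi$ already has continuous paths, a continuous modification of $Y$ yields one of $X$. For $0\le t\le t'\le T$ and each $l\in\{b,h,\sigma\}$, I split the increment as
\[
\int_{0}^{t'}l(t',s,X(s))\,d\mu_{l}(s)-\int_{0}^{t}l(t,s,X(s))\,d\mu_{l}(s)=\int_{0}^{t}\bigl[l(t',s,X(s))-l(t,s,X(s))\bigr]d\mu_{l}(s)+\int_{t}^{t'}l(t',s,X(s))\,d\mu_{l}(s),
\]
with $\mu_{l}$ being $ds$, $d\langle B\rangle_s$, or $dB_s$ respectively. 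For $l=\sigma$, BDG combined with (H4) controls the first piece by $C|t'-t|^{\alpha(2+\varepsilon)}$, and the hypothesis $\alpha>1/(2+\varepsilon)$ is precisely what makes this exponent strictly greater than $1$; the second piece is bounded by $C|t'-t|^{1+\varepsilon/2}$ via BDG plus Jensen on an interval of length $|t'-t|$. For $l=b,h$, Jensen together with the linear growth (H2) and the $L^{2+\varepsilon}$ analogue of \eqref{jie_contin_1}, which ensures $\sup_{t\in[0,T]}\hat{\mathbb{E}}[|X(t)|^{2+\varepsilon}]<\infty$, controls the second piece by $C|t'-t|^{2+\varepsilon}$.

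The main obstacle is the first piece for $l=b,h$: from (H3) alone one only obtains $T^{2+\varepsilon}\rho(|t'-t|)^{2+\varepsilon}$, which does not yield a polynomial rate in $|t'-t|$. The proof therefore tacitly needs the same H\"{o}lder-in-$t$ regularity (H4) for $b$ and $h$ (with some exponent $\alpha'>1/(2+\varepsilon)$), under which this piece is bounded by $C|t'-t|^{\alpha'(2+\varepsilon)}$. Granted this, all six contributions are $O(|t'-t|^{1+\delta})$ with $\delta=\min(\alpha(2+\varepsilon)-1,\,\alpha'(2+\varepsilon)-1,\,\varepsilon/2)>0$, so Theorem~\ref{continuous} delivers a H\"{o}lder-continuous modification $\tilde{Y}$ of $Y$, and $\tilde{X}:=\phi+\tilde{Y}$ is the required continuous modification of $X$.
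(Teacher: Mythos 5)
Your first part (existence and uniqueness in $M_{G}^{2+\varepsilon}(0,T)$ via the weighted contraction argument with \eqref{bdg} in place of \eqref{ito}) matches the paper, which simply declares this step ``similar to Theorem \ref{jie1}.'' Your treatment of the stochastic integral term also coincides with the paper's: split at $t$, use BDG with (H4) on $\int_{0}^{t}[\sigma(t',s,X(s))-\sigma(t,s,X(s))]dB_s$ to get $C|t'-t|^{(2+\varepsilon)\alpha}$, use BDG, Jensen and the bound $\sup_{t}\hat{\mathbb{E}}[|X(t)|^{2+\varepsilon}]<\infty$ on $\int_{t}^{t'}\sigma(t',s,X(s))dB_s$ to get $C|t'-t|^{1+\varepsilon/2}$, and invoke Theorem \ref{continuous}.

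The genuine gap is in your handling of the $ds$ and $d\langle B\rangle_s$ terms. You force these through the Kolmogorov criterion as well, correctly observe that (H3) only yields $T^{2+\varepsilon}\rho(|t'-t|)^{2+\varepsilon}$ with no polynomial rate, and then conclude that the theorem ``tacitly needs'' a H\"older-in-$t$ condition on $b$ and $h$. That conclusion is wrong: no such extra hypothesis is needed, because these two integrals are Lebesgue--Stieltjes integrals and can be treated pathwise, for fixed $\omega$, without any moment estimate or modification argument. The paper's inequality \eqref{ds_contin} does exactly this:
\begin{equation*}
\Bigl|\int_{0}^{t'} b(t', s, X(s))\,ds-\int_{0}^{t} b(t, s, X(s))\,ds\Bigr|
\le T\rho(|t'-t|)+L\Bigl(\int_{0}^{T}(1+|X(s)|^{2})\,ds\Bigr)^{\frac{1}{2}}|t'-t|^{\frac{1}{2}},
\end{equation*}
by (H3) on $[0,t]$ and Cauchy--Schwarz with the linear growth bound on $[t,t']$; since $\int_{0}^{T}(1+|X(s)|^{2})\,ds<\infty$ q.s., the right-hand side tends to $0$ as $t'\to t$ for q.s.\ every $\omega$, giving pathwise continuity outright. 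Only the $dB_s$ integral, which cannot be controlled path by path, requires the Kolmogorov-type criterion and hence a genuine H\"older rate --- which is precisely why (H4) is imposed on $\sigma$ alone in the statement. So your plan, as written, does not prove the theorem under its stated hypotheses; the missing idea is to decouple the drift and quadratic-variation integrals from the Kolmogorov argument and verify their continuity pathwise using only (H3).
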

	
	\begin{proof}[Proof]
		The proof of the existence and uniqueness of the solution is similar to  Theorem \ref{jie1}. Thus, equation (\ref{G-SVIE}) has a unique solution  $ X(\cdot) \in M_{G}^{2+\varepsilon}(0,T) $.   Moreover, since $  \phi(\cdot)\in \tilde{M}_{G}^{2+\varepsilon}(0,T)$, we get $ X(\cdot)\in \tilde{M}_{G}^{2+\varepsilon}(0,T) $. Then, following the proof of inequality \eqref{jie_contin_1},  we derive that $\sup_{t\in[0,T]}\hat{\mathbb{E}}[|X(t)|^{2+\varepsilon}]<\infty$. Next, we consider the pathwise continuity of solution (\ref{G-SVIE}). 
		
		On the one hand, for each $ t'\geq t $, we have
		\begin{align*}
			&\left| \int_{0}^{t'}\sigma(t',s,X(s))dB_s-\int_{0}^{t}\sigma(t,s,X(s))dB_s\right| 
			\\\leq&\left|\int_{0}^{t}\sigma(t’,s,X(s))-\sigma(t,s,X(s))dB_s\right| +\left| \int_{t}^{t'}\sigma(t’,s,X(s))dB_s\right| .
		\end{align*}
		By BDG's inequality and H\"{o}lder's inequality, we obtain from the assumption (H2), (H4) that
		\begin{align*}
			{}&\hat{\mathbb{E}}\left[ \left| \int_{0}^{t'}\sigma(t',s,X(s))dB_s-\int_{0}^{t}\sigma(t,s,X(s))dB_s\right| ^{2+\varepsilon}\right]\\
			\leq{}&C(\varepsilon)\left(\hat{\mathbb{E}}\left[ \left| \int_{0}^{t}\sigma(t',s,X(s))-\sigma(t,s,X(s))dB_s\right| ^{2+\varepsilon}\right] +\hat{\mathbb{E}}\left[ \left|\int_{t}^{t'}\sigma(t',s,X(s))dB_s\right| ^{2+\varepsilon}\right] \right)\\
			\leq{}&C(\bar{\sigma},\varepsilon)\left(\hat{\mathbb{E}}\left[ \left| \int_{0}^{t}C_{T}|t'-t|^{2\alpha}ds\right| ^{\frac{2+\varepsilon}{2}}\right] +\hat{\mathbb{E}}\left[ \left| \int_{t}^{t'}L^{2}(1+|X(s)|^{2})ds\right| ^{\frac{2+\varepsilon}{2}}\right] \right)\\
			\leq{}&C(\bar{\sigma},T,L,\varepsilon)\left(|t'-t|^{(2+\varepsilon)\alpha}+|t'-t|^{1+\frac{\varepsilon}{2}} \sup_{t\in[0,T]}\hat{\mathbb{E}}[1+|X(t)|^{2+\varepsilon}]\right),
		\end{align*}
		where the constant $C(\bar{\sigma},T,L,\varepsilon)$ depends on $\bar{\sigma},T,L,\varepsilon. $
		Note that both $ (2+\varepsilon)\alpha $ and $1+\frac{\varepsilon}{2}  $ are greater than $ 1 $. Then, using Theorem \ref{continuous}, we conclude that there exists a $ \delta $-order H\"{o}lder continuous modification of $ \int_{0}^{t}\sigma(t,s,X(s))dB_s,\ t\in[0,T] $, for $ \delta\in(0,\frac{(2+\varepsilon)\alpha-1}{2+\varepsilon}\wedge \frac{\varepsilon}{4+2\varepsilon}) $. 
		
		On the other hand, under the assumptions (H2)-(H3), we have that for fixed $\omega\in \Omega$, 
		\begin{equation}\label{ds_contin}
			\begin{aligned}
				\left| \int_{0}^{t'} b(t', s, X(s)) ds-\int_{0}^{t} b(t, s, X(s)) ds\right| &\leq \left| \int_{0}^{t} b(t', s, X(s))-b(t, s, X(s))ds\right| +\left| \int_{t}^{t'} b(t', s, X(s))ds\right| \\&\leq\rho(|t'-t|)T+2L^2\left(\int_{0}^{T} (1+|X(s)|^2)ds\right)^{\frac{1}{2}}|t'-t|^{\frac{1}{2}}.
			\end{aligned}
		\end{equation}
		Since $ X(\cdot) \in M_{G}^{2+\varepsilon}(0,T) $, we obtain $\left(\int_{0}^{T} (1+|X(s)|^2)ds\right)^{\frac{1}{2}}<\infty \text{ }q.s.$ Letting $ t'\rightarrow t$, we obtain the intergral $\int_{0}^{t} b(t, s, X(s)) ds$, $t\in[0,T]$ has continuous paths q.s. Similarly, we can prove the pathwise continuity of $\int_{0}^{t} h(t, s, X(s)) d\left\langle B\right\rangle_s.$
		Thus, based on the pathwise continuity for $\phi(t)$ in $t$, we establish the desired results. 	\end{proof}	
	\begin{remark}The above results and methods still hold for multi-dimensional $X$ and $G$-Brownian motion $B$.
	\end{remark}

	\section{Comparison theorem of  $ G $-SVIEs}
	\noindent
	
	In this section, we consider the following form of $ G $-SVIEs: for each $t\in[0,T]$,
	\begin{equation}\label{4.1} 
		X_i(t)=\phi_i(t)+\int_{0}^{t}b_i(t,s,X_i(s))ds+\int_{0}^{t}h_i(t,s,X_i(s))d\langle B\rangle_s+H(t)\int_{0}^{t}\sigma(s,X_i(s))dB_s,\quad i=1,2,
	\end{equation}
	where the coefficients  
	\begin{equation*}
		b_i(t, s,\omega, x),\text{ }h_i(t,s,\omega,x):\Delta  \times \Omega \times \mathbb{R}
		\rightarrow \mathbb{R},  
	\end{equation*}
	\[
	\sigma(s,\omega,x):[0,T] \times \Omega \times \mathbb{R}
	\rightarrow \mathbb{R},
	\]
	\[
	H(t,\omega):[0,T] \times \Omega
	\rightarrow \mathbb{R}.
	\] 
	
	In order to obtain the comparison results for the $G-$SVIEs \eqref{4.1}, we need the following additional assumptions. 
	
	\begin{description}
		\item[(A1)]The function $H(\cdot)\in \tilde{M}_G^2(0,T)$ has continuous paths, and there exist positive constants  $m$, $M$ such that
		\[m\leq H(t,\omega)\leq M,\quad t\in[0,T],\ \omega\in\Omega;	\] 
		\item[(A2)]
		For each $\omega\in \Omega$, $x\in\mathbb{R}$, $t'\geq t\geq s $, 
		\[
		\frac{b_1(t',s,x)-b_2(t',s,x)}{H(t')}\geq \frac{b_1(t,s,x)-b_2(t,s,x)}{H(t)}
		\geq0,
		\]
		\[
		\frac{h_1(t',s,x)-h_2(t',s,x)}{H(t')}\geq \frac{h_1(t,s,x)-h_2(t,s,x)}{H(t)}
		\geq0;
		\]
		\item[(A3)]There exists $i=1$ or $2$ such that  for each $\omega\in \Omega$, $x,y\in\mathbb{R}$, $ y\geq x$ and $t'\geq t\geq s $, 
		\[
		\frac{l_i(t',s,y)-l_i(t',s,x)}{H(t')}\geq \frac{l_i(t,s,y)-l_i(t,s,x)}{H(t)},
		\]
		where $ l_i=b_i,h_i$ and $i=1,2$;
		\item[(A4)] For each $\omega\in \Omega$, $  t'\geq t $, 
		\[\frac{\phi_1(t')-\phi_2(t')}{H(t')}\geq\frac{\phi_1(t)-\phi_2(t)}{H(t)}\geq0.\]
	\end{description}
	\begin{remark}\label{dandiao}
		The assumption (A3) holds if and only if there exists $i=1$ or $2$ such that for any $ y\geq x$ and $t'\geq t $,
		\[
		\frac{l_i(t',s,y)}{H(t')}-\frac{l_i(t,s,y)}{H(t)}\geq\frac{l_i(t',s,x)}{H(t')}-\frac{l_i(t,s,x)}{H(t)},
		\]
		where $ l=b_i,h_i$ and $i=1,2$. This inequality implies that the function $\frac{l_i(t',s,y)}{H(t')}-\frac{l_i(t,s,y)}{H(t)}$ is a non-decreasing function in $y$. 
	\end{remark}
	
	Using the main results in Theorem \ref{jie1}, we first establish the existence and uniqueness of solutions to $G$-SVIEs \eqref{4.1} as follows.
	\begin{proposition}\label{4jie_noncontin}
		Suppose that $  \phi_i(\cdot)\in \tilde{M}_{G}^{2}(0,T)  $ satisfy $ \sup\limits_{t\in[0,T]}\hat{\mathbb{E}}[|\phi_i(t)|^{2}]<\infty$, $H$ satisfies (A1) and $ b_i$, $ h_i$, $\sigma$ satisfy (H1)-(H3)  for $i=1,2$.  Then $G-$SVIEs \eqref{4.1} admit a unique solution $ X_i(\cdot)\in \tilde{M}_G^2(0,T) $ for $i=1,2 $ with $ \sup\limits_{t\in[0,T]}\hat{\mathbb{E}}[|X_i(t)|^{2}]<\infty$.
	\end{proposition}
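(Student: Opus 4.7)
The plan is to adapt the fixed-point argument of Theorem \ref{jie1} to accommodate the factor $H(t)$ sitting outside the stochastic integral. A direct appeal to Theorem \ref{jie1} with effective diffusion coefficient $\tilde{\sigma}(t,s,x):=H(t)\sigma(s,x)$ is not available: since $H(t)$ is only $\mathcal{F}_t$-measurable, the map $s\mapsto \tilde{\sigma}(t,s,x)$ fails the adaptedness required by (H1). Instead I would work directly with the map
$$\Lambda_t(X):=\phi_i(t)+\int_{0}^{t} b_i(t,s,X(s))\,ds+\int_{0}^{t} h_i(t,s,X(s))\,d\langle B\rangle_s+H(t)\int_{0}^{t}\sigma(s,X(s))\,dB_s,\quad t\in[0,T],$$
defined on $M_G^2(0,T)$, and show that it has a unique fixed point that automatically sits in $\tilde{M}_G^2(0,T)$.

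For well-definedness, the first three terms lie in $\tilde{M}_G^2(0,T)$ by Lemma \ref{MG} applied to $b_i$ and $h_i$, together with the hypothesis $\phi_i\in\tilde{M}_G^2(0,T)$. For the stochastic-integral term, set $Y(t):=\int_0^t\sigma(s,X(s))\,dB_s$. Since $\sigma$ does not depend on $t$, the process $Y(\cdot)$ lies in $\tilde{M}_G^2(0,T)$ by (H1)-(H2), and it admits a pathwise continuous modification via the BDG inequality \eqref{bdg} combined with Theorem \ref{continuous}. Using (A1), namely that $H$ has continuous paths and satisfies $m\leq H\leq M$, and mimicking the step-function approximation in $t$ from Lemma \ref{MG} with the extra uniform bound $|H(t)|\leq M$, one verifies that $H(\cdot)Y(\cdot)\in\tilde{M}_G^2(0,T)$; hence $\Lambda$ is well-defined.

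For the contraction, using $|H(t)|^2\leq M^2$ together with the Itô-type identity \eqref{4.9_3} and the weighted-norm device from Theorem \ref{jie1}, one obtains the analogue of \eqref{3.7}:
$$\hat{\mathbb{E}}\left[\int_0^T e^{-\beta t}\left|\Lambda_t(X_1)-\Lambda_t(X_2)\right|^2 dt\right]\leq\frac{C(T,L,\bar{\sigma},M)}{\beta}\hat{\mathbb{E}}\left[\int_0^T e^{-\beta s}\left|X_1(s)-X_2(s)\right|^2 ds\right].$$
Choosing $\beta$ sufficiently large makes $\Lambda$ a contraction under the weighted $M_G^2$-norm, so by Banach's fixed-point theorem there is a unique solution $X_i\in\tilde{M}_G^2(0,T)$. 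The uniform bound $\sup_{t\in[0,T]}\hat{\mathbb{E}}[|X_i(t)|^2]<\infty$ is then obtained by substituting the fixed point back into the equation and estimating each term in the spirit of \eqref{jie_contin_1}, exploiting $\sup_{t\in[0,T]}\hat{\mathbb{E}}[|\phi_i(t)|^2]<\infty$ and $|H(t)|\leq M$ to close the estimate without requiring mean-square continuity of $\phi_i$.

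The main obstacle is the well-definedness step: verifying that $t\mapsto H(t)\int_0^t\sigma(s,X(s))\,dB_s$ belongs to $\tilde{M}_G^2(0,T)$. Each factor separately is well-controlled, but since $H(t)$ cannot be pushed inside the stochastic integral, the direct diffusion-coefficient argument of Lemma \ref{MG} does not cover the product; what saves the day is the pathwise continuity of both $H$ and the Itô integral process, together with the uniform bound on $H$, which together allow the step-function approximation in $t$ to go through.
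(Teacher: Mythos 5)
Your proposal is correct and follows essentially the same route as the paper: well-definedness of the solution map via Lemma \ref{MG} together with (A1) for the product $H(\cdot)\int_0^{\cdot}\sigma(s,X(s))\,dB_s$, the weighted-norm contraction argument of Theorem \ref{jie1} adapted to the factor $H(t)$ sitting outside the stochastic integral, and the uniform second-moment bound obtained as in \eqref{jie_contin_1}. You are in fact somewhat more careful than the paper, which invokes Theorem \ref{jie1} ``directly'' even though $H(t)\sigma(s,x)$ satisfies neither (H1) nor (H3) as a Volterra diffusion coefficient (cf.\ Remark \ref{shuoming}); your explicit handling of the product term and the redone contraction make precise exactly what the paper leaves implicit (only note that the pathwise continuity of the It\^{o} integral follows from the $p=2$ case of \eqref{bdg} rather than from Theorem \ref{continuous}, and that mean-square continuity already suffices for the step-function approximation).
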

	\begin{proof}
		Note that the coefficient $\sigma$ is independent of $t$, which satisfies the assumption (H3) directly. Since $b$, $h$, $\sigma$ satisfy (H1)-(H3) and $H$ satisfies (A1),  we obtain from Lemma \eqref{MG} that the integral  
		\begin{equation}\label{jifen}
		\int_{0}^{t}b_i(t,s,X_i(s))ds+\int_{0}^{t}h_i(t,s,X_i(s))d\langle B\rangle_s+H(t)\int_{0}^{t}\sigma(s,X_i(s))dB_s\in \tilde{M}_{G}^{2}(0,T), \quad t\in[0,T],\text{ }i=1,2,
	\end{equation}which is well-defined. And the existence and uniqueness of the solution to equation \eqref{4.1} can be derived from Theorem \ref{jie1} directly. Following the proof of \eqref{jie_contin_1}, we have $ \sup\limits_{t\in[0,T]}\hat{\mathbb{E}}[|X_i(t)|^{2}]<\infty$ for $i=1,2$.
	\end{proof}
	\begin{remark}\label{shuoming}
		(1) Since $H(t)$ does not satisfy the assumption (H3), the integral $H(t)\int_{0}^{t}\sigma(s,X_i(s))dB_s$, $t\in[0,T]$ is not mean-square for $i=1,2$. Thus, $X_i(\cdot)$ is not mean-square continuous under the condition of Proposition \ref{4jie_noncontin} for $i=1,2$.
		\\(2) From the proof of Lemma \ref{MG}, we know that the effect of (H3) is to ensure that the integral \eqref{jifen} belongs to $\tilde{M}_{G}^{2}(0,T)$. If the coefficients $b_i$, $h_i$, $\sigma$, $i=1,2$ satisfy (H1)-(H2)  and the integral \eqref{jifen} is well-defined in the following, the conclusion of proposition \ref{4jie_noncontin} also holds.
	\end{remark}
	
	\begin{proposition}\label{4jie_contin}
		Suppose that $  \phi_i(\cdot)\in \tilde{M}_{G}^{2}(0,T)  $ has continuous paths with $ \sup\limits_{t\in[0,T]}\hat{\mathbb{E}}[|\phi_i(t)|^{2}]<\infty$, $H$ satisfies (A1) and $ b_i$, $h_i$, $\sigma$ satisfy (H1)-(H3)  for $i=1,2$.  Then $G$-SVIEs \eqref{4.1} admit a unique  solution $ X_i(\cdot)\in \tilde{M}_G^2(0,T) $ satisfying $ \sup\limits_{t\in[0,T]}\hat{\mathbb{E}}[|X_i(t)|^{2}]<\infty$ for $i=1,2 $. Moreover, $ X_i(\cdot)$ has continuous paths for $i=1,2 $.
	\end{proposition}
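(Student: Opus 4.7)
The plan is to reduce existence, uniqueness, and the $L^2$-bound directly to Proposition \ref{4jie_noncontin}, and then to establish pathwise continuity by exploiting the special structure of the diffusion coefficient $H(t)\sigma(s,x)$, in which the $t$-dependence is carried entirely by $H(t)$ as a multiplicative factor that can be pulled outside the $G$-It\^o integral.

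First, since all hypotheses of Proposition \ref{4jie_noncontin} are in force (continuous paths are a stronger condition than what is assumed there), the unique solution $X_i(\cdot)\in \tilde{M}_G^2(0,T)$ exists and satisfies $\sup_{t\in[0,T]}\hat{\mathbb{E}}[|X_i(t)|^2]<\infty$ for $i=1,2$. It remains to exhibit a continuous modification.

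For the pathwise continuity I would write
\[X_i(t)=\phi_i(t)+I_i^b(t)+I_i^h(t)+H(t)J_i(t),\]
where $I_i^b(t)=\int_0^t b_i(t,s,X_i(s))\,ds$, $I_i^h(t)=\int_0^t h_i(t,s,X_i(s))\,d\langle B\rangle_s$, and $J_i(t)=\int_0^t \sigma(s,X_i(s))\,dB_s$, and treat each summand in turn. The $\phi_i$ term is continuous by hypothesis. For $I_i^b$, I would split
\[I_i^b(t')-I_i^b(t)=\int_0^t\bigl(b_i(t',s,X_i(s))-b_i(t,s,X_i(s))\bigr)\,ds+\int_t^{t'}b_i(t',s,X_i(s))\,ds,\]
and bound the first piece by $\rho(|t'-t|)T$ via (H3) and the second by $L\bigl(\int_0^T(1+|X_i(s)|^2)\,ds\bigr)^{1/2}|t'-t|^{1/2}$ via (H2) and Cauchy--Schwarz, exactly as in \eqref{ds_contin}. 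Since $X_i(\cdot)\in M_G^2(0,T)$ implies $\int_0^T|X_i(s)|^2\,ds<\infty$ quasi-surely, both bounds vanish as $t'\to t$ quasi-surely. The argument for $I_i^h$ is identical, using $d\langle B\rangle_s\le\bar{\sigma}^2\,ds$ quasi-surely.

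The main obstacle, and the reason the stronger assumption (H4) and higher-moment hypotheses needed in Theorem \ref{th3.8} can be dispensed with here, is the diffusion term. The crucial observation is that $\sigma(s,X_i(s))$ does not depend on $t$, so $J_i$ is an ordinary $G$-It\^o integral of a process in $M_G^2(0,T)$ in which $t$ appears only as the upper limit; it therefore admits a quasi-surely continuous modification by the standard construction of the $G$-It\^o integral. Combining this with the pathwise continuity of $H(\cdot)$ from (A1), the product $H(t)J_i(t)$ is quasi-surely continuous, and summing the four summands yields the desired continuous modification of $X_i(\cdot)$.
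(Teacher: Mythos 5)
Your proposal is correct and follows essentially the same route as the paper: existence, uniqueness and the uniform $L^2$-bound are inherited from the earlier well-posedness results, the drift and $d\langle B\rangle$ terms are handled pathwise exactly as in \eqref{ds_contin}, and the diffusion term is continuous because $\sigma(s,x)$ is $t$-independent, so $\int_0^t\sigma(s,X_i(s))\,dB_s$ is a standard $G$-It\^o integral with a quasi-surely continuous modification, multiplied by the continuous process $H(\cdot)$. Your write-up is in fact more explicit than the paper's (which only sketches these steps), but the underlying argument is the same.
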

	
	\begin{proof}
		Since $H(\cdot)$ has continuous paths,  it is clear that $H(t)\int_{0}^{t}\sigma(s,X_i(s))dB_s, t\in[0,T]$ has continuous paths for $i=1,2$. Following the proof of inequality \eqref{ds_contin}, it is easy to verify that the integral $\int_{0}^{t} b_i(t, s, X_i(s)) ds+\int_{0}^{t} h_i(t, s, X_i(s)) d\left\langle B\right\rangle_s, t\in[0,T]$ also has continuous paths for $i=1,2$. Then we can derive that $X_i(\cdot)$ has continuous sample paths for $i=1,2.$ The existence and uniqueness of the solution to \eqref{4.1} can be derived from Theorem \ref{jie1} similarly.
	\end{proof}
	
	In addition, we emphasize that the comparison theorem for $G$-SVIEs with  general Volterra-type diffusion coefficients may fail, even when $G $ is a linear function (see Remark 2 in \cite{tudor1989a}). So we consider the $ G $-SVIEs with separable Volterra-type diffusion coefficient, where  the coefficient  $\sigma(t,s,x)$ takes the form $ H(t)\sigma(s,x) $ (see \cite{ferreyra2000comparison}).  
	
	Set $ \bar{X}_i(t)=\frac{X_i(t)}{H(t)} , i=1,2$, $ \hat{X}(t) =\bar{X}_1(t)-\bar{X}_2(t)$ and $ \hat{\phi}(t)=\phi_1(t)-\phi_2(t) $. From \eqref{4.1} we obtain that for $i=1,2$,
	\begin{equation}\label{2}
		\begin{aligned}
			\bar{X}_i(t) &= \frac{\phi_i(t)}{H(t)}+\int_{0}^{t}\frac{b_{i}(t,s,\bar{X}_i(s)H(s))}{H(t)}ds+\int_{0}^{t}\frac{h_{i}(t,s,\bar{X}_i(s)H(s))}{H(t)}d\langle B\rangle_s+\int_{0}^{t}\sigma(s,\bar{X}_i(s)H(s))dB_s.
		\end{aligned}
	\end{equation}
	Then 
	\begin{equation}\label{3}
		\begin{aligned}
			\hat{X}(t) = \frac{\hat{\phi}(t)}{H(t)}
			+\int_{0}^{t}\frac{\tilde{b}(t,s)}{H(t)}ds
			+\int_{0}^{t}\frac{\tilde{h}(t,s)}{H(t)}d\langle B\rangle_s+\int_{0}^{t}\tilde{\sigma}(s)dB_s,
		\end{aligned}
	\end{equation}
	where $ \tilde{b}(t,s)=b_{1}(t,s,\bar{X}_1(s)H(s))-b_{2}(t,s,\bar{X}_2(s)H(s)) $,
	$ \tilde{h}(t,s)= h_{1}(t,s,\bar{X}_1(s)H(s))-h_{2}(t,s,\bar{X}_2(s)H(s)) $,
	$ \tilde{\sigma}(s)=\sigma(s,\bar{X}_1(s)H(s))-\sigma(s,\bar{X}_2(s)H(s)). $ 
	
	Denote the Lipschitz function sequences 
	$\gamma_n(y):=I_{\left\lbrace |y|\leq\frac{1}{n}\right\rbrace}+(2-n|y|)I_{\left\lbrace \frac{1}{n}\leq|y|\leq\frac{2}{n}\right\rbrace}$, $ \forall n\in \mathbb{N} $. And set 
	\[	(1-\gamma_n(y))y^{-1} =
	\begin{cases}
		\frac{1}{y},&  |y|\geq\frac{2}{n},
		\\(n|y|-1)y^{-1},&  \frac{1}{n}\leq|y|\leq\frac{2}{n},
		\\0,&|y|\leq\frac{1}{n}.
	\end{cases}
	\] Based on this and the quasilinearization for $ b_2,\ h_2,\ \sigma $, we firstly construct the following sequences of $G$-SVIEs, i.e.
	\begin{equation}\label{8}
		\begin{aligned}
			\hat{X}^{n}(t) =&\frac{\hat{\phi}(t)}{H(t)}+\int_{0}^{t}\frac{b^{n}(t,s)\hat{X}^{n}(s)+\hat{b}(t,s)}{H(t)}ds+\int_{0}^{t}\frac{h^{n}(t,s)\hat{X}^{n}(s)+\hat{h}(t,s)}{H(t)}d\langle B\rangle_s
			\\&+\int_{0}^{t}\sigma^n(s)\hat{X}^{n}(s)dB_s,\quad n\in\mathbb{N},
		\end{aligned}
	\end{equation}
	where 	
	\begin{equation}\label{def_n}
		\begin{aligned}
			&l^{n}(t,s)=\frac{1-\gamma_n(\hat{X}(s))}{\hat{X}(s)}(l_2(t,s,\bar{X}_1(s)H(s))-l_2(t,s,\bar{X}_2(s)H(s))),\quad l=b,h,
			\\&\hat{l}(t,s)=l_1(t,s,\bar{X}_1(s)H(s))-l_2(t,s,\bar{X}_1(s)H(s)),\quad l=b,h,
			\\&\sigma^{n}(s)=\frac{1-\gamma_n(\hat{X}(s))}{\hat{X}(s)}(\sigma(s,\bar{X}_1(s)H(s))-\sigma(s,\bar{X}_2(s)H(s))).		
		\end{aligned}
	\end{equation}
	
	Now, we will prove the convergence result of $\hat{X}^{n}$.
	\begin{lemma}\label{nixianxing}
		Suppose that $  \phi_i(\cdot)\in \tilde{M}_{G}^{2}(0,T)  $ satisfy $ \sup\limits_{t\in[0,T]}\hat{\mathbb{E}}[|\phi_i(t)|^{2}]<\infty$, $H$ satisfies (A1) and $ b_i$, $h_i$, $ \sigma$ satisfy the assumptions (H1)-(H3)  for $i=1,2$. Let $ X_i(\cdot),\ i=1,2 $ be the solutions of $G-$SVIEs \eqref{4.1} corresponding to $ \phi_i$, $b_i$, $h_i$, $\sigma$ and $H$. Then
		\begin{equation}\label{X_n_X}
			\lim_{n\rightarrow \infty}\hat{\mathbb{E}}\left[\sup\limits_{t\leq T}|\hat{X}(t)-\hat{X}^n(t)|^2\right]=0.
		\end{equation}
	\end{lemma}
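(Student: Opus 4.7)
The plan is to cast the discrepancy $\hat{X}(t)-\hat{X}^n(t)$ as the solution of a linear $G$-SVIE driven by a uniformly $O(1/n)$ forcing term, then run the weighted-norm argument from the proof of Theorem \ref{jie1} to obtain $L^2$-in-time convergence, and finally upgrade to the sup-norm via BDG.

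First I would subtract \eqref{8} from \eqref{3} after splitting
\[
\tilde{b}(t,s)=\hat{b}(t,s)+\bigl[b_2(t,s,\bar{X}_1(s)H(s))-b_2(t,s,\bar{X}_2(s)H(s))\bigr]
\]
(and analogously for $\tilde{h}$, $\tilde{\sigma}$), observing that the very definition \eqref{def_n} of $b^n$, $h^n$, $\sigma^n$ gives
\[
b_2(t,s,\bar{X}_1H)-b_2(t,s,\bar{X}_2H)=b^n(t,s)\hat{X}(s)+\gamma_n(\hat{X}(s))\bigl[b_2(t,s,\bar{X}_1H)-b_2(t,s,\bar{X}_2H)\bigr],
\]
and likewise for $h_2$ and $\sigma$. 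This produces an integral equation for $\hat{X}-\hat{X}^n$ whose linear part has bounded coefficients (by (H2) and (A1), $|b^n|,|h^n|,|\sigma^n|\leq LM$ and $H\geq m$) and whose inhomogeneous parts $R_b^n,R_h^n,R_\sigma^n$ are pathwise bounded by $2LM/n$, since $\gamma_n$ is supported in $\{|y|\leq 2/n\}$ and $|b_2(t,s,\bar{X}_1H)-b_2(t,s,\bar{X}_2H)|\leq LM|\hat{X}(s)|$ by (H2).

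Next I would repeat the weighted-norm computation that produced \eqref{3.7}, using in particular the identity \eqref{4.9_3} to control the stochastic integral, to obtain an inequality of the form
\[
\hat{\mathbb{E}}\!\left[\int_0^T e^{-\beta t}|\hat{X}(t)-\hat{X}^n(t)|^2\,dt\right]\leq\frac{C_1}{\beta}\hat{\mathbb{E}}\!\left[\int_0^T e^{-\beta s}|\hat{X}(s)-\hat{X}^n(s)|^2\,ds\right]+\frac{C_2}{\beta n^2},
\]
with constants depending only on $L,M,m,\bar{\sigma},T$. Choosing $\beta$ so large that $C_1/\beta\leq 1/2$ absorbs the first right-hand term and yields $\hat{\mathbb{E}}[\int_0^T|\hat{X}-\hat{X}^n|^2\,dt]\leq K/n^2$. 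Finally, starting again from the pathwise error equation, I would apply Cauchy--Schwarz to the $ds$ and $d\langle B\rangle_s$ integrals (using $d\langle B\rangle_s\leq\bar{\sigma}^2\,ds$ pathwise together with the uniform bounds on $b^n/H$ and $h^n/H$) and the BDG inequality \eqref{bdg} to the $dB_s$ integral to obtain
\[
\hat{\mathbb{E}}\!\left[\sup_{t\leq T}|\hat{X}(t)-\hat{X}^n(t)|^2\right]\leq K_1\hat{\mathbb{E}}\!\left[\int_0^T|\hat{X}(s)-\hat{X}^n(s)|^2\,ds\right]+\frac{K_2}{n^2},
\]
which combined with the previous bound delivers \eqref{X_n_X}.

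The main obstacle is the dependence of the linear coefficient $b^n(t,s)$ (and of $\hat{b}(t,s)$) on the outer variable $t$: this prevents a direct Gronwall argument on $\hat{\mathbb{E}}[|\hat{X}(t)-\hat{X}^n(t)|^2]$ and forces the weighted-norm device combined with a Fubini exchange in $(t,s)$, exactly as in the existence proof of Theorem \ref{jie1}. A second subtlety is controlling the stochastic remainder $\int_0^t R_\sigma^n(s)\,dB_s$ uniformly in $n$, which is secured by the universal bound $|\gamma_n(y)\,y|\leq 2/n$; this bound decouples the decay of $R_\sigma^n$ from any a priori integrability of $1/\hat{X}$ itself and is precisely the advantage of the cut-off $\gamma_n$ over a naive quasilinearization.
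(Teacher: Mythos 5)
Your core decomposition is exactly the paper's: the identity $b^{n}(t,s)\hat{X}(s)+\gamma_n(\hat{X}(s))\bigl[b_2(t,s,\bar{X}_1H)-b_2(t,s,\bar{X}_2H)\bigr]=b_2(t,s,\bar{X}_1H)-b_2(t,s,\bar{X}_2H)$ is the paper's equation \eqref{4}, your remainders $R^n_b,R^n_h,R^n_\sigma$ are the paper's $a^n,c^n,d^n$, and the bound $2LH(s)/n\le 2LM/n$ via the support of $\gamma_n$ is the same. The only structural difference is the final estimation: the paper applies BDG directly to the error equation to bound $\hat{\mathbb{E}}[\sup_{t\le u}|\hat{X}(t)-\hat{X}^n(t)|^2]$ by $C(n^{-2}+\int_0^u\hat{\mathbb{E}}[\sup_{r\le s}|\hat{X}(r)-\hat{X}^n(r)|^2]ds)$ and runs Gronwall on the running supremum, whereas you first obtain the $L^2(dt)$ rate via the weighted norm and then upgrade to the supremum with one more BDG application. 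Both work; your stated ``main obstacle'' is a slight mischaracterization, since the $t$-dependence of $b^n(t,s)$ does not obstruct the direct Gronwall argument here --- the coefficients are bounded by $2LM$ uniformly in $t$, which is precisely what lets the paper avoid the weighted-norm device (and note that the stochastic integrand $\sigma^n(s)(\hat X(s)-\hat X^n(s))+d^n(s)$ does not depend on $t$ at all, so the $L(t,r)$ trick of \eqref{4.9_1}--\eqref{4.9_3} is not needed).

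There is one genuine omission: you take the existence of $\hat{X}^n$ solving \eqref{8} for granted. In the $G$-framework this is not free --- one must check that $b^n(t,\cdot)$, $h^n(t,\cdot)$, $\sigma^n$, $\hat b(t,\cdot)$, $\hat h(t,\cdot)$ belong to $M_G^2$ (this uses the quasi-continuity result of \cite{hu2016quasi}, since $(1-\gamma_n(y))y^{-1}$ composed with $\hat X(s)$ need not a priori land in $M_G^2$), and that the $t$-dependence of the coefficients of \eqref{8} satisfies a modulus-of-continuity bound ($|b^n(t',s)-b^n(t,s)|\le 2n\rho(|t'-t|)$, etc.) so that the Volterra integrals in \eqref{8} are well-defined elements of $\tilde M_G^2(0,T)$ by Lemma \ref{MG}, and Proposition \ref{4jie_noncontin} applies to give $\hat X^n\in\tilde M_G^2(0,T)$ with $\sup_t\hat{\mathbb{E}}[|\hat X^n(t)|^2]<\infty$. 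The paper devotes the first half of its proof to exactly this verification; without it, both the error equation and the a priori finiteness needed to launch Gronwall are unjustified. Adding that verification would make your argument complete.
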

	\begin{proof}
		According to the definition of $\gamma_n(\cdot)$, it is easy to check that $  I_{\left[-\frac{1}{n},\frac{1}{n}\right]} \leq \gamma_n(\cdot) \leq I_{\left[-\frac{2}{n},\frac{2}{n}\right]} $. Note that $ (1-\gamma_n(y))y^{-1} $ is continuous function in $y$ and $ |(1-\gamma_n(y))y^{-1}|\leq n$ for each  $n\in\mathbb{N}$. Recalling (H1)-(H2), we can deduce that $ b^{n}(t,s),h^{n}(t,s), \sigma^{n}(s) $ belong to $ M_G^2(0,T)$ by \cite{hu2016quasi} for each $t\in[0,T]$ and $|b^{n}(t,s)|\leq 2LH(s),|h^{n}(t,s)|\leq 2LH(s), |\sigma^{n}(s)|\leq 2LH(s)$. Note that the integral
		\begin{equation}\label{yuxiang_1}
			\int_{0}^{t}\hat{b}(t,s)ds+\int_{0}^{t}\hat{h}(t,s)d\langle B\rangle_s
		\end{equation}
		belongs to $\tilde{M}_{G}^{2}(0,T) $, which satisfies that 
		\begin{equation}\label{yuxiang_2}
			\sup\limits_{t\leq T}\hat{\mathbb{E}}\left[ \left| \int_{0}^{t}\hat{b}(t,s)ds\right|^2+\left| \int_{0}^{t}\hat{h}(t,s)d\langle B\rangle_s\right| ^2\right] \leq 4(1+\bar{\sigma}^4)TL^2\hat{\mathbb{E}}\left[\int_{0}^{T}(1+M^2|\bar{X}_1(s)|^2)ds\right] <\infty.
		\end{equation}
		Thus, we can replace $\frac{\hat{\phi}(t)}{H(t)}$ by $\frac{\hat{\phi}(t)}{H(t)}+\int_{0}^{t}\hat{b}(t,s)ds+\int_{0}^{t}\hat{h}(t,s)d\langle B\rangle_s$ for each $t\in[0,T]$.
		It is easy to verify that the coefficients of equation \eqref{8} satisfy (H1)-(H2).  For each $(t,s),\ (t',s)\in \Delta$, $n\in\mathbb{N}$,
		\begin{equation}\label{b_n_mo_contin}
			\begin{aligned}
				|b^n(t',s)-b^n(t,s)|\leq&\left|\frac{1-\gamma_n(\hat{X}(s))}{\hat{X}(s)}\right||b_2(t',s,\bar{X}_1(s)H(s))-b_2(t,s,\bar{X}_1(s)H(s))|
				\\&+\left|\frac{1-\gamma_n(\hat{X}(s))}{\hat{X}(s)}\right||b_2(t,s,\bar{X}_2(s)H(s))-b_2(t',s,\bar{X}_2(s)H(s))|
				\\\leq&2n\rho(|t'-t|)
			\end{aligned}
		\end{equation}
		and $|h^n(t',s)-h^n(t,s)|\leq2n\rho(|t'-t|). $ Similarly, we have $|\hat{b}(t',s)-\hat{b}(t,s)|\leq 2\rho(|t'-t|)$ and $|\hat{h}(t',s)-\hat{h}(t,s)|\leq 2\rho(|t'-t|)$. Then, by Lemma \ref{MG}, we get the integral $
		\int_{0}^{\cdot} (b^{n}(\cdot,s)Z(s)+\hat{b}(\cdot,s)) ds+ \int_{0}^{\cdot} (h^{n}(\cdot,s)Z(s)+\hat{h}(\cdot,s))d\left\langle B\right\rangle_s \in \tilde{M}_G^2(0,T)$ for each $ Z(\cdot)\in M_G^2(0,T)$. Since $H(\cdot)$ is bounded, we can obtain that for each $ Z(\cdot)\in M_G^2(0,T)$, \[H(\cdot)^{-1}\left( 
		\int_{0}^{\cdot} (b^{n}(\cdot,s)Z(s)+\hat{b}(\cdot,s) )ds+ \int_{0}^{\cdot} (h^{n}(\cdot,s)Z(s)+\hat{h}(\cdot,s))d\left\langle B\right\rangle_s\right) \in \tilde{M}_G^2(0,T),\] which is well-defined.	Combining Proposition \ref{4jie_noncontin} with Remark \ref{shuoming} (2), the equations $G$-SVIEs \eqref{8} admit a unique solution $ \hat{X}^n(\cdot)\in \tilde{M}_G^2(0,T) $ for $n\in \mathbb{N}$.  Following the proof of \eqref{jie_contin_1}, we have 
		\begin{equation}\label{X_n<infty}
		\sup\limits_{t\in[0,T]}\hat{\mathbb{E}}\left[|\hat{X}^n(t)|^2\right]<\infty.
		\end{equation} 
		
		Applying linearization techniques to \eqref{3}, it yields that
		\begin{equation}\label{4}
			\begin{aligned}
				\hat{X}(t) =&\frac{\hat{\phi}(t)}{H(t)}+\int_{0}^{t}\frac{b^{n}(t,s)\hat{X}(s)+\hat{b}(t,s)+a^n(t,s)}{H(t)}ds
				\\&
				+\int_{0}^{t}\frac{h^{n}(t,s)\hat{X}(s)+\hat{h}(t,s)+c^n(t,s)}{H(t)}d\langle B\rangle_s+\int_{0}^{t}\left(\sigma^n(s)\hat{X}(s)+d^n(s)\right)dB_s,
			\end{aligned}
		\end{equation}
		where $a^{n}(t,s)=\gamma_n(\hat{X}(s))(b_2(t,s,\bar{X}_1(s)H(s))-b_2(t,s,\bar{X}_2(s)H(s))) $, $c^{n}(t,s)=\gamma_n(\hat{X}(s))(h_2(t,s,\bar{X}_1(s)H(s))-h_2(t,s,\bar{X}_2(s)H(s))) $, $d^{n}(s)=\gamma_n(\hat{X}(s))(\sigma(s,\bar{X}_1(s)H(s))-\sigma(s,\bar{X}_2(s)H(s)))$ and $b^{n}(t,s),h^{n}(t,s),\hat{b}(t,s),\hat{h}(t,s)$, $\sigma^{n}(s)  $	are defined in \eqref{def_n}. It is easy to verify that \[
		|a^{n}(t,s)|\leq L|\hat{X}(s)|H(s)\gamma_n(\hat{X}(s))\leq \frac{2}{n}LH(s).
		\]
		Similarly, we derive $ |c^{n}(t,s)|\leq \frac{2}{n}LH(s),  |d^{n}(s)|\leq \frac{2}{n}LH(s)$.  From \eqref{8} and \eqref{4}, we get
		\begin{equation}\label{4.13_1}
			\begin{aligned}
				\hat{X}(t)-\hat{X}^{n}(t)&= \int_{0}^{t}\frac{b^{n}(t,s)(\hat{X}(s)-\hat{X}^{n}(s))+a^n(t,s)}{H(t)}ds
				+\int_{0}^{t}\frac{h^{n}(t,s)(\hat{X}(s)-\hat{X}^{n}(s))+c^n(t,s)}{H(t)}d\langle B\rangle_s\\&
				+\int_{0}^{t}\left(\sigma^n(s)(\hat{X}(s)-\hat{X}^{n}(s))+d^n(s)\right)dB_s.
			\end{aligned}
		\end{equation}
		By BDG's inequality \eqref{bdg}, we derive for each $u\in[0,T],$
		\begin{equation}\label{13}
			\begin{aligned}
				&\hat{\mathbb{E}}\left[\sup\limits_{t\leq u}|\hat{X}(t)-\hat{X}^n(t)|^2\right]
				\\\leq&6\left\{2(1+\bar{\sigma}^4)T\hat{\mathbb{E}}\left[\sup\limits_{t\leq u}\int_{0}^{u}\frac{\left(|b^n(t,s)|^2+|h^n(t,s)|^2\right)|\hat{X}(s)-\hat{X}^n(s)|^2}{H^2(t)}ds\right]\right.
				\\&\left.+2(1+\bar{\sigma}^4)T\hat{\mathbb{E}}\left[\sup\limits_{t\leq u}\int_{0}^{u}(\frac{|a^n(t,s)|^2+|c^n(t,s)|^2}{H^2(t)}+|d^n(s)|^2)ds\right]+\hat{\mathbb{E}}\left[\sup\limits_{t\leq u}\left|\int_{0}^{t}\sigma^n(s)(\hat{X}(s)-\hat{X}^n(s))dB_s\right|^2\right]
				\right\}
				\\\leq& C(\bar{\sigma},L,T)\left(\frac{M^2}{m^2}+M^2\right)\left\{\frac{1}{n^2}+\hat{\mathbb{E}}\left[\int_{0}^{u}|\hat{X}(s)-\hat{X}^n(s)|^2ds\right]\right\},
			\end{aligned}
		\end{equation}
		where $C(\bar{\sigma},L,T)$ depends on $\bar{\sigma},L,T$. Set $u=T$, we have $\hat{\mathbb{E}}\left[\sup\limits_{t\leq T}|\hat{X}(t)-\hat{X}^n(t)|^2\right]<\infty$.  Then, by \eqref{13}, we obtain 
\begin{equation}\label{gronwall_1}
\hat{\mathbb{E}}\left[\sup\limits_{t\leq u}|\hat{X}(t)-\hat{X}^n(t)|^2\right]\leq C(\bar{\sigma},L,T)\left(\frac{M^2}{m^2}+M^2\right)\left\{\frac{1}{n^2}+\hat{\mathbb{E}}\left[\int_{0}^{u}\sup\limits_{r\leq s}|\hat{X}(r)-\hat{X}^n(r)|^2ds\right]\right\}.
\end{equation}
Applying Gronwall's inequality to \eqref{gronwall_1}, we have
		\[
		\hat{\mathbb{E}}\left[\sup\limits_{t\leq T}|\hat{X}(t)-\hat{X}^n(t)|^2\right]\leq \frac{C}{n^2}e^{CT},\quad n\in\mathbb{N},
		\]
		where $C=C(\bar{\sigma},L,T)\left(\frac{M^2}{m^2}+M^2\right).$ Sending $n\rightarrow \infty$, we conclude that \eqref{X_n_X}.
	\end{proof}
	\begin{remark}
		Note that the new quasilinearization technique for $ b_2,\ h_2,\ \sigma $ ( see \eqref{4}) is different from the classical one, which takes the form of 
		\begin{equation}\label{classical_linear}
			\tilde{b}(t,s)=\hat{b}(t,s)+b(t,s)H(s)(\bar{X}_1(s)-\bar{X}_2(s)),
		\end{equation}
		where $ b(t,s)=\int_{0}^{1}\partial_x b_2\left(t,s,\bar{X}_2(s)H(s)+\alpha(\bar{X}_1(s)-\bar{X}_2(s))H(s)\right)d\alpha$. 
	\end{remark}
	Now we establish the comparison theorem for $G$-SVIEs \eqref{4.1} under the condition that $\phi_i(\cdot)$ has continuous paths for $i=1,2$.
	\begin{theorem}\label{th4.2}
		Suppose that $  \phi_i(\cdot)\in \tilde{M}_{G}^{2}(0,T)  $ has continuous paths with $ \sup\limits_{t\in[0,T]}\hat{\mathbb{E}}[|\phi_i(t)|^{2}]<\infty$, $H$ satisfies (A1) and $ b_i, h_i, \sigma$ satisfy the assumptions (H1)-(H3), (A2)-(A4)  for $i=1,2$. Let $ X_i(\cdot),\ i=1,2 $ be the solutions of $G-$SVIEs \eqref{4.1} corresponding to $ \phi_i,b_i,h_i,\sigma$ and $H$. Then \[X_1(t)\geq X_2(t),\quad  \forall t\in [0,T],\text{ } q.s.
		\]  
	\end{theorem}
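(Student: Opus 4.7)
The plan is to reduce the comparison $X_1 \geq X_2$ to showing $\hat{X}(t) := \bar X_1(t) - \bar X_2(t) \geq 0$ q.s. for all $t \in [0,T]$; this reduction is valid because $H(t) \geq m > 0$ by (A1). By Lemma \ref{nixianxing}, $\hat{\mathbb{E}}[\sup_{t\le T}|\hat X(t)-\hat X^n(t)|^2]\to 0$, so after extracting a q.s.-convergent subsequence it suffices to prove $\hat X^n(t)\geq 0$ q.s.\ for every fixed $n$ and $t\in[0,T]$, then pass to the limit $n\to\infty$.

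The linear $G$-SVIE \eqref{8} for $\hat X^n$ still carries a genuine Volterra structure because the kernels $b^n(t,s)/H(t)$, $h^n(t,s)/H(t)$ and the forcing $\hat\phi(t)/H(t)$, $\int_0^t \hat b(t,s)/H(t)\,ds$, $\int_0^t \hat h(t,s)/H(t)\,d\langle B\rangle_s$ all depend on the outer time $t$. The natural next move, matching the two-step approximation \eqref{two_approximation} announced in the introduction, is to introduce a second approximation $\hat X^{n,N}$ (the sequence \eqref{9}) associated with a partition $\Pi_N=\{0=t_0<\dots<t_N=T\}$ of vanishing mesh, by freezing the outer time in every coefficient at $t=t_{k+1}$ on each subinterval $[t_k,t_{k+1})$. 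After freezing, the equation on $[t_k,t_{k+1})$ is no longer Volterra in the outer time and reduces to a linear $G$-SDE whose diffusion coefficient is the stochastic-exponential type $\sigma^n(s)\hat X^{n,N}(s)dB_s$; its solution admits a closed-form Doleans exponential representation whose sign is controlled by the sign of $\hat X^{n,N}(t_k)$ and of the frozen forcing.

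The non-negativity of the forcing after freezing is exactly the content of (A2) and (A4): on $[t_k,t_{k+1})$, the frozen data become $\hat\phi(t_{k+1})/H(t_{k+1})$ and the kernels $\hat b(t_{k+1},s)/H(t_{k+1})$, $\hat h(t_{k+1},s)/H(t_{k+1})$, all of which are q.s.\ non-negative by (A2), (A4). Assumption (A3) together with Remark \ref{dandiao} ensures that the multiplicative kernels $b^n(t_{k+1},s)/H(t_{k+1})$ and $h^n(t_{k+1},s)/H(t_{k+1})$ behave monotonically, so the Doleans factor built from the frozen SDE stays non-negative. I would then argue inductively in $k$: $\hat X^{n,N}(0)=\hat\phi(0)/H(0)\geq 0$ by (A4), and passing from $[0,t_k]$ to $[0,t_{k+1}]$ reduces to non-negativity of the solution of a linear SDE with non-negative initial value and non-negative forcing. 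To carry out the sign analysis rigorously in the $G$-framework, I would fix $P\in\mathcal{P}$ via Theorem \ref{thm2.1}, introduce under $P$ the stopping time $\tau^P:=\inf\{t\in[t_k,t_{k+1}):\hat X^{n,N}(t)<0\}$, perform the Doleans-exponential estimate up to $\tau^P$, and then derive a contradiction from $\tau^P<t_{k+1}$; taking the supremum over $P\in\mathcal{P}$ converts this to a quasi-sure statement and avoids the obstacle that indicators of stopping times do not in general lie in $M_G^2(0,T)$.

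Finally, I would show $\hat{\mathbb{E}}[\sup_{t\le T}|\hat X^n(t)-\hat X^{n,N}(t)|^2]\to 0$ as $|\Pi_N|\to 0$ by a Gronwall argument analogous to \eqref{13}, using the modulus of continuity (H3), the bounds $|b^n|,|h^n|,|\sigma^n|\leq 2LH(s)$ established in the proof of Lemma \ref{nixianxing}, and the pathwise continuity of $\phi_i$ and $H$; this transfers the quasi-sure non-negativity from $\hat X^{n,N}$ to $\hat X^n$, and then Lemma \ref{nixianxing} yields $\hat X\geq 0$ q.s., finishing the proof. I expect the main obstacle to be the second step: verifying well-posedness and the sign property of $\hat X^{n,N}$ at the partition points $t_k$ in a way compatible with the $G$-capacity, which is precisely where the $P$-by-$P$ stopping-time construction is essential, rather than any Itô or Gronwall calculation, which is routine.
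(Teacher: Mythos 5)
Your overall architecture matches the paper's: reduce to $\hat X\geq 0$, use Lemma \ref{nixianxing} to pass to the quasilinearized $\hat X^n$, freeze the outer time to turn the Volterra equation into a chain of linear SDEs, run an induction over subintervals using (A2)--(A4) and Remark \ref{dandiao} to keep the effective initial data and forcing nonnegative, and argue under each $P\in\mathcal{P}$ before aggregating via Theorem \ref{thm2.1}. Your Dol\'eans-exponential sign analysis on each frozen piece is equivalent to the paper's invocation of the SDE comparison theorem.

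The genuine gap is in your second approximation. You freeze the outer time along a \emph{deterministic} partition $\Pi_N$ and claim $\hat{\mathbb{E}}[\sup_{t\le T}|\hat X^n(t)-\hat X^{n,N}(t)|^2]\to 0$ by a Gronwall argument ``using the pathwise continuity of $\phi_i$ and $H$''. Under the hypotheses of the theorem, $\phi_i$ only has continuous paths with $\sup_{t}\hat{\mathbb{E}}[|\phi_i(t)|^{2}]<\infty$; there is no uniform modulus of continuity and no control of $\hat{\mathbb{E}}[\sup_t|\phi_i(t)|^2]$. Consequently the freezing error $E_P[|\hat\phi(t_k)-\hat\phi(t)|^2]$ over a deterministic subinterval tends to $0$ only pointwise in $\omega$, with neither an integrable dominating function nor uniform integrability available, so the forcing term in your Gronwall estimate does not vanish as $|\Pi_N|\to 0$. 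The paper's key device is to build the partition out of stopping times: $\tau_k^\delta$ is the first time after $\tau_{k-1}^\delta$ at which $1/H$ or $\hat\phi$ has oscillated by $\delta$, capped at $\tau_{k-1}^\delta+\delta$, so that $|\hat\phi^{\delta}(t)-\hat\phi(t)|\leq\delta$ and $|1/H^{\delta}(t)-1/H(t)|\leq\delta$ hold \emph{deterministically} (see \eqref{5}) and the convergence \eqref{12} follows at once. This is also the true reason the construction must be carried out under each fixed $P$: the indicators $I_{[\tau_k^\delta,\tau_{k+1}^\delta)}$ need not belong to $M_G^2(0,T)$, so the frozen equation \eqref{9} is solved by classical SVIE theory under $P$ and only the final inequality is aggregated over $\mathcal{P}$. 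Your $P$-by-$P$ step is instead directed at a first-hitting-time-of-negativity argument inside each subinterval, which is not where the obstruction lies; once the stopping-time partition is in place, the sign analysis on each piece is the standard linear-SDE comparison together with the verification that the adjusted initial value $\tilde X^{n,\delta}_P(\tau_k^\delta)$ stays nonnegative, as in \eqref{15}--\eqref{19}.
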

	\begin{proof}	
		By (A1) and equation \eqref{3}, we only need to prove \[ \hat{X}(t) \geq 0,  \quad \forall t\in[0,T],\text{ } q.s. \] 
		
		For each fixed $P\in \mathcal{P}$, since $\phi_{i}(t),\ H(t)$ has continuous paths, we define the sequence of stopping times $\{\tau_k^{\delta}\}_{k\in \mathbb{N}}$ as follows, where $\mathcal{P}$ is as given in Lemma \ref{thm2.1}. For any $k\in \mathbb{N},\ \delta>0$, set $\tau_{0}^{\delta} =0$ and
		\[ 
		\tau_{k}^{\delta}=\inf\{t\in (\tau_{k-1}^{\delta}, T]: |\frac{1}{H(t)}-\frac{1}{H(\tau_{k-1}^{\delta})}|\ge\delta \text{ or } |\hat{\phi}(t)-\hat{\phi}(\tau_{k-1}^{\delta})|\ge\delta \} \wedge[(\tau_{k-1}^{\delta}+\delta)\wedge T],
		\] 
		where $\inf \emptyset = \infty$. Then the sequence of stopping times $\{\tau_k^{\delta}\}_{k\in \mathbb{N}}$ satisfies $ 0=\tau_0^\delta<\tau_1^\delta<\tau_2^\delta<\cdots\leq T $ and $\max\limits_{ k\geq 0}|\tau_{k}^{\delta}-\tau_{k-1}^{\delta}| \leq\delta$. For each $\omega\in\Omega$, there exists 
		$m\in \mathbb{N}$ such that $\tau_{m}^{\delta}(\omega)=T.$ Recalling \eqref{8}, we define  for each $(t,s)\in \Delta$,
		\[
		\hat{\phi}^{\delta}(t)=\sum\limits_{k\geq0}\hat{\phi}(\tau_{k}^{\delta})I_{[\tau_{k}^{\delta},\tau_{k+1}^{\delta})}(t),
		\quad  \frac{1}{H^{\delta}(t)}=\sum\limits_{k\geq0}\frac{1}{H(\tau_{k}^{\delta})}I_{[\tau_{k}^{\delta},\tau_{k+1}^{\delta})}(t),
		\]
		\[
		b^{n,\delta}(t,s)=\sum\limits_{k\geq0}b^n(\tau_{k}^{\delta}\vee s,s)I_{[\tau_{k}^{\delta},\tau_{k+1}^{\delta})}(t),
		\quad h^{n,\delta}(t,s)=\sum\limits_{k\geq0}h^n(\tau_{k}^{\delta}\vee s,s)I_{[\tau_{k}^{\delta},\tau_{k+1}^{\delta})}(t)
		\]
		and 
		\[
		\hat{b}^\delta(t,s)=\sum\limits_{k\geq0}\hat{b}(\tau_{k}^{\delta}\vee s,s)I_{[\tau_{k}^{\delta},\tau_{k+1}^{\delta})}(t),
		\quad
		\hat{h}^\delta(t,s)=\sum\limits_{k\geq0}\hat{h}(\tau_{k}^{\delta}\vee s,s)I_{[\tau_{k}^{\delta},\tau_{k+1}^{\delta})}(t).
		\]
		Since the sequence of stopping times $\{\tau_k^{\delta}\}_{k\in \mathbb{N}}$ is constructed under each $P\in \mathcal{P}$, we derive from Lemma \ref{thm2.1} that $b^{n,\delta}(t,s)$, $h^{n,\delta}(t,s)$, $\hat{b}^\delta(t,s) $, $\hat{h}^\delta(t,s)$, $t\in[0,T]$ belong to $M_G^2(0,T)$ for each $\bar{X}_1(\cdot)$, $\bar{X}_2(\cdot) \in M_G^2(0,T)$.
		
		Following this, we construct the approximation sequences of $\{\hat{X}^{n,\delta}_P\}_{n\in\mathbb{N}}$ under $ P $ and verify its convergence properties. For each fixed $ n\in \mathbb{N}, \ t\in [0,T] $, $\delta>0$, 
		\begin{equation}\label{9}
			\begin{aligned}
				\hat{X}^{n,\delta}_P(t) =&\frac{\hat{\phi}^{\delta}(t)}{H^{\delta}(t)}+\int_{0}^{t}\sigma^n(s)\hat{X}^{n,\delta}_P(s)dB_s+\int_{0}^{t}\frac{b^{n,\delta}(t,s)\hat{X}^{n,\delta}_P(s)+\hat{b}^{\delta}(t,s)}{H^{\delta}(t)}ds
				\\&+\int_{0}^{t}\frac{h^{n,\delta}(t,s)\hat{X}^{n,\delta}_P(s)+\hat{h}^{\delta}(t,s)}{H^{\delta}(t)}d\langle B\rangle_s.
			\end{aligned}
		\end{equation}
		And note that $|b^{n,\delta}(t,s)|\leq2LH(s), |h^{n,\delta}(t,s)|\leq2LH(s),|\sigma^n(s)| \leq 2LH(s)$, $\forall n\in \mathbb{N}$. Following the proof of inequalities \eqref{yuxiang_1}-\eqref{yuxiang_2}, we can derive that the coefficients of equation \eqref{9} satisfy assumptions (H1)-(H2). Then by Theorem 3.A. in \cite{berger1980volterra}, the SVIE \eqref{9} under $P$ admits a unique solution $ \hat{X}^{n,\delta}_P(\cdot)\in M_P^2(0,T) $ , where\[
		M_P^2(0,T):=\left\lbrace X:[0,T]\times\Omega:X(\cdot) \text{ is } \mathcal{F}_{t}\text{-adapted, measurable process s.t. } E_{P}\left[\int_{0}^{T}|X(t)|^{2}dt\right]<\infty  \right\rbrace  .
		\] Then, following the proof of \eqref{jie_contin_1}, we have 
		\begin{equation}\label{X_ndelta<infty}
			\sup\limits_{t\in[0,T]}E_P\left[|\hat{X}^{n,\delta}_P(t)|^2\right]<\infty.
		\end{equation} 
		
		Next, for each fixed $ n\in\mathbb{N} $ and $ P\in\mathcal{P} $, we establish the convergence of  $\hat{X}^{n,\delta}_P(t) $ and $\hat{X}^{n}(t)$. Combining \eqref{8} with \eqref{9}, we obtain that 
		\begin{equation}\label{10}
			\begin{aligned}
				&|\hat{X}^{n,\delta}_P(t)-\hat{X}^n(t)|^2
				\\\leq&6\left\{\left|\frac{\hat{\phi}^{\delta}(t)}{H^{\delta}(t)}-\frac{\hat{\phi}(t)}{H(t)}\right|^2+ \left|\int_{0}^{t}\sigma^{n}(s)(\hat{X}^{n,\delta}_P(s)-\hat{X}^n(s))dB_s\right|^2\right.
				\\&\left.+ T\int_{0}^{t}\left|\frac{b^{n,\delta}(t,s)\hat{X}^{n,\delta}_P(s)}{H^{\delta}(t)}-\frac{b^{n}(t,s)\hat{X}^{n}(s)}{H(t)}\right|^2ds+T\int_{0}^{T}\left|\frac{\hat{b}^{\delta}(t,s)}{H^{\delta}(t)}-\frac{\hat{b}(t,s)}{H(t)}\right|^2ds\right.
				\\&\left.+\bar{\sigma}^4T\int_{0}^{t}\left|\frac{h^{n,\delta}(t,s)\hat{X}^{n,\delta}_P(s)}{H^{\delta}(t)}-\frac{h^{n}(t,s)\hat{X}^{n}(s)}{H(t)}\right|^2ds+\bar{\sigma}^4T\int_{0}^{T}\left|\frac{\hat{h}^{\delta}(t,s)}{H^{\delta}(t)}-\frac{\hat{h}(t,s)}{H(t)}\right|^2ds
				\right\}.
			\end{aligned}
		\end{equation}
		Then we will give the estimates for the above terms. By the definition of  $\{\tau_k^{\delta}\}_{k\in \mathbb{N}}$, we get
		\begin{equation}\label{5}
			|\hat{\phi}^{\delta}(t)-\hat{\phi}(t)|\leq\delta, \quad |\frac{1}{H^{\delta}(t)}-\frac{1}{H(t)}|\leq\delta,
		\end{equation}
		which implies that
		\[
		\begin{aligned}
			E_P\left[\left|\frac{\hat{\phi}^{\delta}(t)}{H^{\delta}(t)}-\frac{\hat{\phi}(t)}{H(t)}\right|^2\right]
			&\leq\frac{2}{m^2}E_P\left[\left|\hat{\phi}^{\delta}(t)-\hat{\phi}(t)\right|^2\right]+2E_P\left[\left|\hat{\phi}(t)\right|^2\left|\frac{1}{H^{\delta}(t)}-\frac{1}{H(t)}\right|^2\right]
			\\&\leq\delta^2\frac{2}{m^2}+2\delta^2\sup\limits_{t\leq T}\hat{\mathbb{E}}\left[|\hat{\phi}(t)|^2\right]<\infty.
		\end{aligned}
		\] 
		From the assumption (H3) and inequality \eqref{b_n_mo_contin}, we have for each $ (t,s)\in \Delta$,
		\begin{equation}\label{6}
			\begin{aligned}
				|b^{n,\delta}(t,s)-b^n(t,s)|=&\sum\limits_{k\geq0}|b^n(\tau_{k}^{\delta}\vee s,s)-b^n(t,s)|I_{[\tau_{k}^{\delta},\tau_{k+1}^{\delta})}(t) 
				\\\leq&2n\sum\limits_{k\geq0}\rho(|\tau_{k}^{\delta}\vee s-t|)I_{[\tau_{k}^{\delta},\tau_{k+1}^{\delta})}(t)\leq2n\rho(\delta)
			\end{aligned}
		\end{equation}
		and
		\begin{equation}\label{7}
			\begin{aligned}
				|\hat{b}^{\delta}(t,s)-\hat{b}(t,s)|=&\sum\limits_{k\geq0}|\hat{b}(\tau_{k}^{\delta}\vee s,s)-\hat{b}(t,s)|I_{[\tau_{k}^{\delta},\tau_{k+1}^{\delta})}(t)
				\\\leq&\sum\limits_{k\geq0}|b_1(\tau_{k}^{\delta}\vee s,s,\bar{X}_1(s)H(s))-b_1(t,s,\bar{X}_1(s)H(s))|I_{[\tau_{k}^{\delta},\tau_{k+1}^{\delta})}(t)
				\\&+\sum\limits_{k\geq0}|b_2(\tau_{k}^{\delta}\vee s,s,\bar{X}_1(s)H(s))-b_2(t,s,\bar{X}_1(s)H(s))|I_{[\tau_{k}^{\delta},\tau_{k+1}^{\delta})}(t)
				\\\leq&2\sum\limits_{k\geq0}\rho(|\tau_{k}^{\delta}\vee s-t|)I_{[\tau_{k}^{\delta},\tau_{k+1}^{\delta})}(t)\leq2\rho(\delta).
			\end{aligned}
		\end{equation}
		Similarly, we have $ |h^{n,\delta}(t,s)-h(t,s)| \leq2n\rho(\delta)$ and  $|\hat{h}^{\delta}(t,s)-\hat{h}(t,s)|\leq2\rho(\delta) $. Then, by a simple calculation, it follows from \eqref{6}-\eqref{7} that
		\[
		\begin{aligned}
			&E_P\left[\int_{0}^{t}\left|\frac{b^{n,\delta}(t,s)\hat{X}^{n,\delta}_P(s)}{H^{\delta}(t)}-\frac{b^{n}(t,s)\hat{X}^{n}(s)}{H(t)}\right|^2ds\right]
			\\\leq&4E_P\left[\int_{0}^{t}\frac{|b^{n,\delta}(t,s)|^2|\hat{X}^{n,\delta}_P(s)-\hat{X}^{n}(s)|^2+|b^{n,\delta}(t,s)-b^{n}(t,s)|^2|\hat{X}^{n}(s)|^2}{|H^{\delta}(t)|^2}ds\right]
			\\&+2E_P\left[\int_{0}^{t}|b^{n}(t,s)|^2|\hat{X}^{n}(s)|^2\left|\frac{1}{H^{\delta}(t)}-\frac{1}{H(t)}\right|^2ds\right]
			\\\leq&\frac{16L^2M^2}{m^2}\int_{0}^{t}E_P\left[|\hat{X}^{n,\delta}_P(s)-\hat{X}^n(s)|^2\right]ds+\left(\frac{16n^2\rho^2(\delta)}{m^2}+8L^2M^2\delta^2\right)E_P\left[\int_{0}^{T}|\hat{X}^{n}(s)|^2ds\right]
		\end{aligned}
		\]
		and
		\[
		\begin{aligned}
			&E_P\left[\int_{0}^{T}\left|\frac{\hat{b}^{\delta}(t,s)}{H^{\delta}(t)}-\frac{\hat{b}(t,s)}{H(t)}\right|^2ds\right]\\\leq&2E_P\left[\int_{0}^{T}\frac{|\hat{b}^{\delta}(t,s)-\hat{b}(t,s)|^2}{|H^{\delta}(t)|^2}ds\right]+2E_P\left[\int_{0}^{T}|\hat{b}(t,s)|^2\left|\frac{1}{H^{\delta}(t)}-\frac{1}{H(t)}\right|^2ds\right]
			\\\leq&\frac{8\rho^2(\delta)T}{m^2}+4\delta^2L^2\hat{\mathbb{E}}\left[\int_{0}^{T}(1+M^2|\bar{X}_1(s)|^2)ds\right].
		\end{aligned}
		\]
		Then we can derive the estimates for the last two terms of inequality \eqref{10} with respect to \( h \) similarly,.  Applying BDG's inequality \eqref{bdg}, we obtain 
		\[
		E_P\left[\left|\int_{0}^{t}\sigma^n(s)(\hat{X}^{n,\delta}_P(s)-\hat{X}^n(s))dB_s\right|^2\right]
		\leq 4\bar{\sigma}^2L^2M^2\int_{0}^{t}E_P\left[|\hat{X}^{n,\delta}(s)-\hat{X}^n(s)|^2\right]ds.
		\]
		According to the above estimates,  we conclude that
		\begin{equation}\label{11}
			\begin{aligned}
				E_P\left[|\hat{X}^{n,\delta}_P(t)-\hat{X}^n(t)|^2\right]\leq& C(\bar{\sigma},L,T)\left\{\left(\delta^2+\rho^2(\delta)\right)\frac{1}{m^2}+\left(\frac{n^2\rho^2(\delta)}{m^2}+M^2\delta^2\right)\hat{\mathbb{E}}\left[\int_{0}^{T}|\hat{X}^{n}(s)|^2ds\right]\right.
				\\&\left.+\delta^2\sup_{t\in[0,T]}\hat{\mathbb{E}}\left[|\hat{\phi}(t)|^2\right]+\delta^2\hat{\mathbb{E}}\left[\int_{0}^{T}(1+M^2|\bar{X}_1(s)|^2)ds\right]\right.	
				\\&\left.+\left(\frac{M^2}{m^2}+M^2\right)\int_{0}^{t}E_P\left[|\hat{X}^{n,\delta}_P(s)-\hat{X}^n(s)|^2\right]ds\right\},
			\end{aligned}
		\end{equation}
		where $C(\bar{\sigma},L,T)$ depends on $\bar{\sigma},L,T$. Combining \eqref{X_n<infty} with \eqref{X_ndelta<infty}, we have $E_P\left[|\hat{X}^{n,\delta}_P(t)-\hat{X}^n(t)|^2\right]<\infty.$
		Thus, applying Gronwall's inequality and  letting $\delta\rightarrow0$, we get
		\begin{equation}\label{12}
			\lim_{\delta\rightarrow0}E_P\left[|\hat{X}^{n,\delta}_P(t)-\hat{X}^n(t)|^2\right]=0,\quad n\in\mathbb{N}.
		\end{equation}
		Recalling Lemma \ref{nixianxing} and sending $n\rightarrow\infty$, we obtain \eqref{X_n_X}. By combining  \eqref{12} with  \eqref{X_n_X}, we conclude that
		\begin{equation}\label{two_approximation}
			\lim_{n\rightarrow\infty}\lim_{\delta\rightarrow0}E_P\left[|\hat{X}^{n,\delta}_P(t)-\hat{X}(t)|^2\right]=0.
		\end{equation}
		
		Finally, we will show that $ \hat{X}^{n,\delta}_P(t)\geq 0,\text{ }P\text{-}a.s.$ for each $ t\in[0,T] $, which will be proved by induction. The proof can be divided into two steps.\\
		Step \MakeUppercase{\romannumeral 1}: On interval $ [0,\tau_{1}^\delta) $, we have
		\begin{equation}\label{15}
			\begin{aligned}
				\hat{X}^{n,\delta}_P(t) &=\frac{\hat{\phi}(0)}{H(0)}+\int_{0}^{t}\frac{b^{n}(s,s)\hat{X}^{n,\delta}(s)+\hat{b}(s,s)}{H(0)}ds
				+\int_{0}^{t}\frac{h^{n}(s,s)\hat{X}^{n,\delta}_P(s)+\hat{h}(s,s)}{H(0)}d\langle B\rangle_s
				\\&+\int_{0}^{t}\sigma^n(s)\hat{X}^{n,\delta}_P(s)dB_s,
			\end{aligned}
		\end{equation}
		which is a SDE and $\hat{X}^{n,\delta}_P(\cdot)$ has continuous paths on $ [0,\tau_{1}^\delta) $. From (A1), (A2) and (A4), we have $\frac{\hat{\phi}(0)}{H(0)}\geq0$, $ \frac{b^{n}(s,s)}{H(0)}\ge0$ and $ \frac{h^{n}(s,s)}{H(0)}\geq0$.	Recalling the comparison theorem of SDE, it follows that 
		\begin{equation}\label{part1geq0}
			\hat{X}^{n,\delta}_P(t)\geq 0,\quad P\text{-}a.s.,  \text{ }\forall t \in [0,\tau_{1}^\delta)
		\end{equation}
		and 
		\begin{equation}\label{16}
			\begin{aligned}
				\hat{X}^{n,\delta}_P(\tau_{1}^\delta-0) =&\frac{\hat{\phi}(0)}{H(0)}+\int_{0}^{\tau_{1}^\delta}\frac{b^{n}(s,s)\hat{X}^{n,\delta}_P(s)+\hat{b}(s,s)}{H(0)}ds
				+\int_{0}^{\tau_{1}^\delta}\frac{h^{n}(s,s)\hat{X}^{n,\delta}_P(s)+\hat{h}(s,s)}{H(0)}d\langle B\rangle_s
				\\&+\int_{0}^{\tau_{1}^\delta}\sigma^n(s)\hat{X}^{n,\delta}_P(s)dB_s 
				\\\geq &0,  \text{ }P\text{-}a.s.
			\end{aligned}
		\end{equation}
		Step \MakeUppercase{\romannumeral 2}: On interval $ [\tau_{1}^\delta,\tau_{2}^\delta) $, we have
		\begin{equation}\label{17}
			\begin{aligned}
				\hat{X}^{n,\delta}_P(t) =&\frac{\hat{\phi}(\tau_{1}^\delta)}{H(\tau_{1}^\delta)}+\int_{0}^{t}\sigma^n(s)\hat{X}^{n,\delta}_P(s)dB_s+\int_{0}^{t}\frac{b^{n}(\tau_{1}^\delta\vee s,s)\hat{X}^{n,\delta}_P(s)+\hat{b}(\tau_{1}^\delta\vee s,s)}{H(\tau_{1}^\delta)}ds
				\\&+\int_{0}^{t}\frac{h^{n}(\tau_{1}^\delta\vee s,s)\hat{X}^{n,\delta}_P(s)+\hat{h}(\tau_{1}^\delta\vee s,s)}{H(\tau_{1}^\delta)}d\langle B\rangle_s.
			\end{aligned}
		\end{equation}
		Obviously, we get
		\begin{equation}\label{19}
			\begin{aligned}
				\hat{X}^{n,\delta}_P(t) =&\tilde{X}^{n,\delta}_P(\tau_{1}^\delta)+\int_{\tau_{1}^\delta}^{t}\sigma^n(s)\hat{X}^{n,\delta}_P(s)dB_s+\int_{\tau_{1}^\delta}^{t}\frac{b^{n}(s,s)\hat{X}^{n,\delta}_P(s)+\hat{b}(s,s)}{H(\tau_{1}^\delta)}ds
				\\&+\int_{\tau_{1}^\delta}^{t}\frac{h^{n}(s,s)\hat{X}^{n,\delta}_P(s)+\hat{h}(s,s)}{H(\tau_{1}^\delta)}d\langle B\rangle_s,
			\end{aligned}
		\end{equation}
		where
		\begin{equation}\label{18}
			\begin{aligned}
				\tilde{X}^{n,\delta}_P(\tau_{1}^\delta)
				=&\frac{\hat{\phi}(\tau_{1}^\delta)}{H(\tau_{1}^\delta)}-\frac{\hat{\phi}(0)}{H(0)}+\hat{X}^{n,\delta}_P(\tau_{1}^\delta-0)
				\\&+\int_{0}^{\tau_{1}^{\delta}}\left[\frac{b^{n}(\tau_{1}^{\delta} ,s)\hat{X}^{n,\delta}_P(s)+\hat{b}(\tau_{1}^{\delta},s)}{H(\tau_{1}^\delta)}-\frac{b^{n}(s,s)\hat{X}^{n,\delta}_P(s)+\hat{b}(s,s)}{H(0)}\right]ds
				\\&+\int_{0}^{\tau_{1}^{\delta}}\left[\frac{h^{n}(\tau_{1}^{\delta},s)\hat{X}^{n,\delta}_P(s)+\hat{h}(\tau_{1}^{\delta},s)}{H(\tau_{1}^\delta)}-\frac{h^{n}(s,s)\hat{X}^{n,\delta}_P(s)+\hat{h}(s,s)}{H(0)}\right]d\langle B\rangle_s.
			\end{aligned}
		\end{equation}
		Based on the assumptions (A1)-(A4) and Remark \ref{dandiao}, we know that \[
		\frac{\hat{\phi}(\tau_{1}^\delta)}{H(\tau_{1}^\delta)}\geq\frac{\hat{\phi}(0)}{H(0)},\quad \frac{l^{n}(\tau_{1}^{\delta},s)}{H(\tau_{1}^\delta)}\geq \frac{l^{n}(s,s)}{H(0)},\quad \frac{\hat{l}(\tau_{1}^{\delta},s)}{H(\tau_{1}^\delta)}\geq\frac{\hat{l}(s,s)}{H(0)},\quad l=b,h.
		\]
		Thus, combining \eqref{part1geq0} with \eqref{16}, we can derive $\tilde{X}^{n,\delta}_P(\tau_{1}^\delta)\geq 0. $  	Applying comparison theorem of SDE to \eqref{19} in a similar way, we  deduce
		\[
		\hat{X}^{n,\delta}_P(t)\geq 0,\quad P\text{-}a.s.,  \text{ }\forall t \in [\tau_{1}^\delta,\tau_{2}^\delta)
		\]
		
		In conclusion, by repeating the procedure outlined in Step \MakeUppercase{\romannumeral 1}-Step \MakeUppercase{\romannumeral 2}, we derive $ \hat{X}^{n,\delta}_P(t)\geq 0, \text{ }P\text{-}a.s.\text{ }\forall t\in [0,T]. $ Combining this result with \eqref{two_approximation}, we obtain that $\hat{X}(t)\ge0, \text{ }P\text{-}a.s.\text{ }\forall t\in[0,T], $ From the definition of capacity, we have 
		\[\hat{X}(t)\ge0, \quad q.s.,\text{ }\forall t\in[0,T].\] Recalling Corollary \ref{4jie_contin}, we can conclude that $\hat{X}(t)$ has continuous paths. Then we derive that 
		\[\hat{X}(t)\ge0, \quad \forall t\in[0,T],\text{ }q.s.\]
		The proof is complete.
	\end{proof}
	\begin{remark}
		By adopting the new quasilinearization approach ( see \eqref{def_n}) and a two-step approximation method (see \eqref{two_approximation}), the assumptions of the modulus of continuity, Lipschitz continuity for partial derivatives of coefficients, and the monotonicity of $H(t)$ are now unnecessarily restrictive, which are different from \cite{tudor1989a,ferreyra2000comparison,wang2015com}. Moreover, we do not need to assume the existence of partial derivatives of coefficients $\partial_{t}b_i, \partial_{x}b_i, i=1,2$.
	\end{remark}
	Now we establish the comparison theorem for $G$-SVIEs \eqref{4.1} under the condition that $\phi_i(\cdot)$ is mean-square continuous for $i=1,2$.
	\begin{theorem}\label{th4.3}
		Suppose that $  \phi_i(\cdot)\in \tilde{M}_{G}^{2}(0,T)  $ is mean-square continuous, $H$ satisfies (A1) and $ b_i, h_i, \sigma$ satisfy the assumptions (H1)-(H3), (A2)-(A4)  for $i=1,2$. Let $ X_i(\cdot),\ i=1,2 $ be the solutions of $G-$SVIEs \eqref{4.1} corresponding to $ \phi_i,b_i,h_i,\sigma$ and $H$. Then \[X_1(t)\geq X_2(t),  \quad q.s., \text{ }\forall t\in [0,T].		\]  
		
	\end{theorem}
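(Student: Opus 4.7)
The strategy is to reduce Theorem \ref{th4.3} to Theorem \ref{th4.2} by a path-regularisation of $\phi_i$ that preserves assumption (A4), and then to pass to the limit via the a priori estimate. The key observation is that (A4) is exactly the requirement that the process
\[
\psi(t):=\frac{\phi_1(t)-\phi_2(t)}{H(t)}
\]
be non-negative and non-decreasing in $t$ pathwise; meanwhile $\phi_1=\phi_2+H\psi$. So rather than smoothing $\phi_1$ and $\phi_2$ individually (which would destroy (A4) because of the multiplier $1/H$), I smooth $\phi_2$ and $\psi$ separately and then reconstruct $\phi_1$.

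Concretely, for each $\varepsilon\in(0,T]$ I set
\[
\phi_2^{\varepsilon}(t)=\frac{1}{\varepsilon}\int_0^{\varepsilon}\phi_2((t-u)^{+})\,du,\qquad \psi^{\varepsilon}(t)=\frac{1}{\varepsilon}\int_0^{\varepsilon}\psi((t-u)^{+})\,du,
\]
and $\phi_1^{\varepsilon}(t)=\phi_2^{\varepsilon}(t)+H(t)\psi^{\varepsilon}(t)$. A direct check shows that $\phi_i^{\varepsilon}\in\tilde{M}_G^{2}(0,T)$ has continuous sample paths (pathwise Riemann averages of $L_G^2$-valued maps, combined with the continuous bounded $H$), and Jensen's inequality together with Lemma \ref{sup} gives $\sup_t\hat{\mathbb{E}}[|\phi_i^{\varepsilon}(t)|^2]<\infty$. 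Since $(\phi_1^{\varepsilon}-\phi_2^{\varepsilon})/H=\psi^{\varepsilon}$ and $\psi$ is non-negative and non-decreasing pathwise, so is $\psi^{\varepsilon}$; hence $\phi_1^{\varepsilon},\phi_2^{\varepsilon}$ satisfy (A4) with the same $H$. All remaining hypotheses of Theorem \ref{th4.2} concern $b_i,h_i,\sigma,H$ and are unchanged. Let $X_i^{\varepsilon}$ be the unique continuous-path solution of \eqref{4.1} driven by $\phi_i^{\varepsilon}$ (Proposition \ref{4jie_contin}). Theorem \ref{th4.2} gives
\[
X_1^{\varepsilon}(t)\ge X_2^{\varepsilon}(t)\quad\text{for all }t\in[0,T],\text{ q.s.}
\]

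To pass to the limit $\varepsilon\downarrow 0$, apply the a priori estimate (the theorem following Theorem \ref{jie1}) to the pair $(X_i^{\varepsilon},X_i)$, which share the same $b_i,h_i,\sigma$ but differ only in the forcing term; this collapses $f(t)$ to $C_1(\bar{\sigma},T)\hat{\mathbb{E}}[|\phi_i^{\varepsilon}(t)-\phi_i(t)|^2]$ and yields
\[
\hat{\mathbb{E}}[|X_i^{\varepsilon}(t)-X_i(t)|^2]\le f(t)+C_2\int_0^{t}f(r)e^{C_2(t-r)}dr.
\]
Mean-square continuity of $\phi_i$, together with the bounds $m\le H\le M$ and the continuity of $H$, imply mean-square continuity of $\psi$, from which a Fubini/dominated-convergence argument gives $\hat{\mathbb{E}}[|\phi_i^{\varepsilon}(t)-\phi_i(t)|^2]\to 0$ for each $t$, uniformly dominated. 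Consequently $X_i^{\varepsilon}(t)\to X_i(t)$ in $L_G^{2}(\Omega_t)$. Extracting a subsequence $\varepsilon_k\downarrow 0$ along which the convergence is quasi-sure (Chebyshev's inequality for capacity plus Borel--Cantelli), the inequality $X_1^{\varepsilon_k}(t)\ge X_2^{\varepsilon_k}(t)$ passes to the limit and delivers $X_1(t)\ge X_2(t)$ q.s.\ for each fixed $t\in[0,T]$.

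The main obstacle is the construction of the regularisation $\phi_i^{\varepsilon}$: it must simultaneously be pathwise continuous in $t$, converge to $\phi_i$ in mean-square, and preserve (A4), a \emph{pathwise} monotonicity tied to the denominator $H(t)$. Naively averaging $\phi_1,\phi_2$ separately breaks (A4); introducing the single non-decreasing process $\psi$ and the decomposition $\phi_1=\phi_2+H\psi$ decouples the problem into two independent time-averagings, each of which clearly inherits the needed structure. Once this is in place, the rest of the argument is routine given the a priori estimate and Theorem \ref{th4.2}.
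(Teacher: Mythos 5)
Your reduction-to-Theorem-\ref{th4.2} strategy is genuinely different from the paper's proof (which re-runs the two-step stopping-time/quasilinearization approximation of Theorem \ref{th4.2}, simply dropping the $\hat{\phi}$-oscillation criterion from the stopping times and using mean-square continuity of $\phi_i$ to control $\hat{\phi}^{\delta}-\hat{\phi}$), and the idea of regularizing $\psi=(\phi_1-\phi_2)/H$ rather than $\phi_1,\phi_2$ separately so as to preserve (A4) is a nice observation. However, there is a genuine gap at the step ``mean-square continuity of $\phi_i$, together with the bounds $m\le H\le M$ and the continuity of $H$, imply mean-square continuity of $\psi$.'' Writing $\psi(s)-\psi(t)=\frac{\hat{\phi}(s)-\hat{\phi}(t)}{H(s)}+\hat{\phi}(t)\bigl(\frac{1}{H(s)}-\frac{1}{H(t)}\bigr)$, the first term is controlled by mean-square continuity of $\phi_i$, but the second requires $\hat{\mathbb{E}}[|1/H(s)-1/H(t)|^{2}]\to 0$, i.e.\ mean-square continuity of $H$. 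Assumption (A1) only gives bounded, pathwise continuous $H$, and under a sublinear expectation pathwise convergence plus a uniform bound does \emph{not} imply convergence of $\hat{\mathbb{E}}$ (dominated convergence fails; one cannot even pass to a monotone envelope, since $\sup_{|s-t|\le\delta}|H(s)-H(t)|^{2}$ need not be quasi-continuous). The paper itself flags exactly this phenomenon: Remark \ref{shuoming}(1) and the remark following Theorem \ref{th4.3} state that $H$ fails (H3)/mean-square continuity, which is precisely why $X_i$ is not mean-square continuous and why the conclusion of Theorem \ref{th4.3} is only ``for each $t$, q.s.''.

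Without mean-square continuity of $H$, your key convergence $\hat{\mathbb{E}}[|\phi_1^{\varepsilon}(t)-\phi_1(t)|^{2}]\to 0$ (equivalently $\psi^{\varepsilon}(t)\to\psi(t)$ in $L_G^{2}$) is unjustified, and the a priori estimate then gives nothing. The difficulty is intrinsic to the construction: to preserve the pathwise monotonicity (A4) you must average $\psi$ itself, but recovering $\phi_1=\phi_2+H\psi$ in mean square then forces you to compare $1/H$ at nearby deterministic times in $L_G^{2}$. The paper avoids this by freezing $1/H$ along stopping times chosen so that $|1/H(t)-1/H(\tau_{k-1}^{\delta})|\le\delta$ \emph{pathwise}, so no mean-square continuity of $H$ is ever invoked. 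To salvage your argument you would need either to add mean-square continuity of $H$ as a hypothesis (strictly stronger than (A1)) or to replace the deterministic-time mollification of $\psi$ by a stopping-time-based one --- at which point you are essentially back to the paper's proof.
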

	\begin{proof}
		The proof is similar to Theorem \ref{th4.2} and we will focus on the important parts. For each fixed $P\in \mathcal{P},\ k\in\mathbb{N},\ \delta>0$, define the sequence of stopping times $\{\tau_k^{\delta}\}_{k\in \mathbb{N}}$ similarly, 
		\[ 
		\tau_{k}^{\delta}=\inf\{t\in (\tau_{k-1}^{\delta},(\tau_{k-1}^{\delta}+\delta)\wedge T]: |\frac{1}{H(t)}-\frac{1}{H(\tau_{k-1}^{\delta})}|\ge\delta \} \wedge[(\tau_{k-1}^{\delta}+\delta)\wedge T],\quad 
		\]  where $\tau_{0}^{\delta}=0$	and $\inf \emptyset =\infty$. Then the sequence of stopping times $\{\tau_k^{\delta}\}_{k\in \mathbb{N}}$ satisfies $ 0=\tau_0^\delta<\tau_1^\delta<\tau_2^\delta<\cdots\leq T $ and $\max\limits_{1\leq k\leq N}|\tau_{k}^{\delta}-\tau_{k-1}^{\delta}| \leq\delta$. Then for each $\omega\in \Omega$, there exists	$m\in \mathbb{N}$ such that $\tau_{m}^{\delta}(\omega)=T.$
		Applying the same method as in \eqref{12} and \eqref{X_n_X}, we conclude that
		\[
		\lim_{n\rightarrow\infty}\lim_{\delta\rightarrow0}\sup_{t\in[0,T]}E_P\left[|\hat{X}^{n,\delta}_P(t)-\hat{X}(t)|^2\right]=0.
		\]
		The remaining proof proceeds similarly to that of Theorem \ref{th4.2}. In conclusion, we derive \[\hat{X}(t)\geq0,  \quad q.s., \text{ }\forall t\in [0,T].
		\]  
	\end{proof}
	\begin{remark}
		Since the integral \[H(t)\int_{0}^{t}\sigma(s,X_i(s))dB_s,\quad t\in[0,T],\]  is not mean-square continuous, the solution $X_i(\cdot)$ of G-SVIEs \eqref{4.1}  is  not mean-square continuous for $i=1,2$ under the conditions of Theorem \ref{th4.3}. On the other hand, as the solution $X_i$($\cdot$) of G-SVIEs \eqref{4.1} does not have continuous paths for $i=1,2$, we cannot derive that $\hat{X}(t)\ge0,\ \forall t\in[0,T],\text{ }q.s.$
	\end{remark}
	
	Now we  present an example of $G$-SVIEs. Note that the differentiability of the coefficients is not assumed in (A2)-(A4). The following example \ref{ex4.8} demonstrates that the conditions (A1)-(A4) are more suited for cases where partial derivatives do not exist, even when $G$ is a linear function.
	
	\begin{example}\label{ex4.8}
		Set $G(a)=\frac{1}{2}a$ for $a\in \mathbb{R}$. In this case, $B$ is a classical Brownian motion. Consider the $G$-SVIEs on [0,T],
		\begin{equation}\label{26}
			\begin{aligned}
				&X_1(t)=1+\int_{0}^{t}\left( -2(X_1(s)\vee 0)e^{-t}+1\right) ds+\int_{0}^{t}X_1(s)dB_s, 
				\\ &X_2(t)=1+\int_{0}^{t}\left(-2(X_2(s)\vee 0))e^{-t}\right) ds+\int_{0}^{t}X_2(s)dB_s,
			\end{aligned}
		\end{equation}
		where $b_1(t,s,x)=-2(x\vee0)e^{-t}+1$, $b_2(t,s,x)=-2(x\vee 0)e^{-t} $. It is easy to check that for each $y\geq x$, $(t,s)\in\Delta$, $b_1, b_2$ satisfy (A2)-(A3). By setting $y_0=e^{t}$, we obtain that for each $y_0=e^{t}>0>x_0$, $b_1(t,s,y_0)=-1<0=b_2(t,s,x_0)$. Then, it fails to satisfy the condition that for each $y\geq x$, $(t,s)\in\Delta$,
		\[
		b_1(t,s,y)\geq b_2(t,s,x), 
		\]
		which is proposed in \cite{tudor1989a,ferreyra2000comparison}. Moreover, the coefficients $b_1(t,s,x),b_2(t,s,x) $ are not differentiable with respect to $x$ at $x=0.$
	\end{example}

\end{document}